\newcommand{\de}{\partial}
\newcommand{\db}{\overline{\partial}}
\newcommand{\ddbar}{\sqrt{-1} \partial \overline{\partial}}
\newcommand{\Ric}{\mathrm{Ric}}
\newcommand{\ov}[1]{\overline{#1}}
\newcommand{\mn}{\sqrt{-1}}
\newcommand{\tr}[2]{\mathrm{tr}_{#1}{#2}}
\newcommand{\ti}[1]{\tilde{#1}}
\newcommand{\vp}{\varphi}
\newcommand{\ve}{\varepsilon}
\renewcommand{\leq}{\leqslant}
\renewcommand{\geq}{\geqslant}
\numberwithin{equation}{section}
\begin{document}
\newtheorem{claim}{Claim}
\newtheorem{theorem}{Theorem}[section]
\newtheorem{lemma}[theorem]{Lemma}
\newtheorem{corollary}[theorem]{Corollary}
\newtheorem{proposition}[theorem]{Proposition}
\newtheorem{question}[theorem]{Question}
\newtheorem{conjecture}[theorem]{Conjecture}

\theoremstyle{definition}
\newtheorem{remark}[theorem]{Remark}

\title[Infinite time singularities of the K\"ahler-Ricci flow]{Infinite time singularities of the K\"ahler-Ricci flow}
\author[V. Tosatti]{Valentino Tosatti}
\thanks{The first-named author is supported in part by NSF grant DMS-1308988 and by a Sloan Research Fellowship. The second-named author is supported in part by grant NSFC-11271015.}
\address{Department of Mathematics, Northwestern University, 2033 Sheridan Road, Evanston, IL 60208}
\email{tosatti@math.northwestern.edu}
  \author[Y. Zhang]{Yuguang Zhang}
\address{Mathematical Sciences Center,  Tsinghua University,  Beijing 100084, P.R.China.}
\email{yuguangzhang76@yahoo.com}

\begin{abstract} We study the long-time behavior of the K\"ahler-Ricci flow on compact K\"ahler manifolds. We give an almost complete classification of the singularity type of the flow at infinity, depending only on the underlying complex structure.
If the manifold is of intermediate Kodaira dimension and has semiample canonical bundle, so that it is fibered by Calabi-Yau varieties, we show that parabolic rescalings around any point on a smooth fiber converge smoothly to a unique limit, which is the product of a Ricci-flat metric on the fiber and of a flat metric on Euclidean space. An analogous result holds for collapsing limits of Ricci-flat K\"ahler metrics.

\end{abstract}

\maketitle

\section{Introduction}

This paper has two main goals. The first goal is to prove some results about collapsing limits of either Ricci-flat K\"ahler metrics or certain solutions of the K\"ahler-Ricci flow, which complement and complete the very recent results proved by Weinkove, Yang and the first-named author in \cite{TWY}. In a nutshell, we prove higher-order a priori estimates for the certain rescalings of the solutions restricted to the fibers of a holomorphic map, and we then classify the possible blowup limit spaces. As it turns out, these limits are unique, and are given by the product of a Ricci-flat K\"ahler metric on a compact Calabi-Yau manifold times a Euclidean metric on $\mathbb{C}^m$.

The second goal of the paper is to study infinite time singularities of the K\"ahler-Ricci flow. As we will recall below, these are divided into two classes called ``type IIb'' and ``type III'' (the other cases ``type I'' and ``type IIa'' are reserved for finite-time singularities). Assuming that the canonical bundle is semiample (which conjecturally is always true whenever we have a long-time solution), we give an almost complete classification (which is complete in complex dimension $2$) of which singularity type arises, depending on the complex structure of the underlying manifold. As a consequence, in all the cases that we cover, the singularity type does not depend on the initial K\"ahler metric.

We now discuss the first goal in detail. As we said above, we will consider two different setups. In the first setup, which is the same as in \cite{To, GTZ, GTZ2, HT, TWY}, we
let $(X,\omega_X)$ be a compact Ricci-flat Calabi-Yau $(n+m)$-manifold, $n>0$, which admits a holomorphic map $\pi:X\to Z$ where $(Z,\omega_Z)$ is a
compact K\"ahler manifold. Denote by $B=\pi(X)$ the image of $\pi$, and assume that $B$ is an irreducible normal subvariety of $Z$ with
dimension $m>0$, and that the map $\pi:X\to B$ has connected fibers. Denote by $\chi=\pi^*\omega_Z$, which is a smooth
nonnegative real $(1,1)$-form on $X$ whose cohomology class lies on the boundary of the K\"ahler cone of $X$, and denote also by
$\chi$ the restriction of $\omega_Z$ to the regular part of $B$. In practice we can take $Z=\mathbb{CP}^N$ if $B$ is a projective variety, or $Z=B$ if $B$ is smooth.
In general, given a map $\pi:X\to B$ as above, there is a proper analytic subvariety $S'\subset B$, which consists of the singular points of $B$ together with the critical values of $\pi$, such that if we let $S:=\pi^{-1}(S')$ we obtain an analytic subvariety of $X$ and $\pi:X\backslash S\to B\backslash S'$ is a smooth submersion. For any $y\in B\backslash S'$ the fiber
$X_y=\pi^{-1}(y)$ is a smooth Calabi-Yau manifold of dimension $n$, and it is equipped with the K\"ahler metric $\omega_X|_{X_y}$.
Consider the K\"ahler metrics on $X$ given by $\ti{\omega}=\ti{\omega}(t)=\chi+e^{-t}\omega_X$, with $t\geq 0$, and call
$\omega=\omega(t)=\ti{\omega}+\ddbar\vp$ the unique Ricci-flat K\"ahler metric on $X$ cohomologous to $\ti{\omega}$, which exist thanks to Yau's Theorem \cite{Ya}, with potentials normalized
by $\sup_X\vp=0$. They satisfy a family of complex Monge-Amp\`ere equations
\begin{equation}\label{ma}
\omega^{n+m}=(\ti{\omega}+\ddbar\vp)^{n+m}=c_t e^{-nt} \omega_X^{n+m},
\end{equation}
where $c_t$ is a constant that has a positive limit as $t\to \infty$. Thanks to the works \cite{To, ST, GTZ, GTZ2, TWY, Z} we know that in this case the Ricci-flat metrics $\omega(t)$ have bounded diameter and collapse locally uniformly on $X\backslash S$ to a canonical K\"ahler metric on $B\backslash S'$, and when the fibers $X_y$ are tori then the collapsing is smooth and with locally bounded curvature on $X\backslash S$. Also, for general smooth fibers, the rescaled metrics along the fibers $e^t\omega|_{X_y}$ converge in $C^\alpha$ to a Ricci-flat metric on $X_y$.

The second setup is as follows (cf. \cite{ST,ST2,ST3, FZ,TWY}). Now $(X,\omega_X)$ is a compact K\"ahler $(n+m)$-manifold, $n>0$, with semiample canonical bundle
and Kodaira dimension equal to $m>0$. As explained for example in \cite{TWY}, sections of $K_X^\ell,$ for $\ell$ large, give rise to a fiber space $\pi:X\to B$ called the {\em Iitaka fibration} of $X$, with $B$ a normal projective variety of dimension $m$ and the smooth fibers $X_y=\pi^{-1}(y),y\in B\backslash S'$ also Calabi-Yau $n$-manifolds. We let $\chi$ be the restriction of $\frac{1}{\ell}\omega_{FS}$ to $B$, as well as its pullback to $X$. This time we consider the solution $\omega=\omega(t)$ of the {\em normalized} K\"ahler-Ricci flow
\begin{equation}\label{nkrf}
\frac{\de}{\de t}\omega=-\Ric(\omega)-\omega,\quad\omega(0)=\omega_X,
\end{equation}
which exists for all $t\geq 0$. Thanks to \cite{ST, ST2, ST3, FZ, TWY} we have that the evolving metrics have uniformly bounded scalar curvature and collapse locally uniformly on $X\backslash S$ to a canonical K\"ahler metric on $B\backslash S'$, and again we have smooth collapsing when the smooth fibers are tori. Again, the rescaled metrics along the fibers $e^t\omega|_{X_y}$ converge in $C^\alpha$ to a Ricci-flat metric on $X_y$.

Our first goal is to improve this last statement by showing the following higher-order estimates:

\begin{theorem}\label{main1} Assume we are in either the first or second setup. Given a compact subset $K\subset B\backslash S'$ and $k\geq 0$ there is a constant $C_k$ such that
\begin{equation}\label{bounds}
\left\| e^t\omega|_{X_y}\right\|_{C^k(X_y,\omega_X|_{X_y})}\leq C_k,\quad
e^t\omega|_{X_y}\geq C_0^{-1} \omega_X|_{X_y},
\end{equation}
holds for all $t\geq 0$ and for all $y\in K$.
\end{theorem}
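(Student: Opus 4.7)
The plan is to promote the known $C^\al$ convergence of the rescaled fiber metrics to uniform $C^k$ estimates, working fiber-by-fiber via a Yau-style $C^2$ estimate followed by an Evans--Krylov/Schauder bootstrap. First, I would reduce to a fiber estimate. Cover $K$ by finitely many coordinate polydiscs $U\Subset B\backslash S'$ over which $\pi$ is a holomorphic submersion and admits a smooth trivialization $\pi^{-1}(U)\cong U\times X_0$, where $X_0$ is a fixed Calabi--Yau $n$-manifold. Then $\{\omega_X|_{X_y}\}_{y\in U}$ is a smooth family of K\"ahler metrics on $X_0$ with uniform $C^\infty$ and bisectional curvature bounds. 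Since $\chi|_{X_y}=0$, one has
\[
\ti\omega_y:=e^t\omega|_{X_y}=\omega_X|_{X_y}+\ddbar_{X_y}\psi_y,
\]
with $\psi_y:=e^t\vp|_{X_y}-a_y(t)$ for a normalizing constant $a_y(t)$. The results of \cite{TWY} already provide a uniform $C^0$ bound $|\psi_y|\leq C$ together with the $C^\al$ convergence mentioned in the introduction.

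The central analytic step is a uniform Laplacian estimate on $\psi_y$. Decomposing $\omega$ in fibered coordinates into base, mixed, and fiber blocks and expanding the total-space Monge--Amp\`ere equation \eqref{ma} (respectively, the potential form of \eqref{nkrf}) produces a fiberwise equation
\[
(\ti\omega_y)^n=F_y(t)\,\omega_X^n|_{X_y},
\]
where $F_y(t)$ is a smooth positive function on $X_y$. The uniform a priori estimates on $\omega$ over compact subsets of $X\backslash S$ available from \cite{TWY, ST, GTZ}, which in particular control the mixed components of $\omega$, imply that $F_y$ is uniformly bounded above and away from zero, with all fiberwise derivatives controlled, uniformly in $t\geq 0$ and $y\in K$. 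A Yau/Chern--Lu-type maximum principle applied on the compact fiber $X_y$ to
\[
H_y:=\log \tr{\omega_X|_{X_y}}{\ti\omega_y}-A\psi_y
\]
for $A$ large, using the bisectional curvature bound of $\omega_X|_{X_y}$, then yields $\tr{\omega_X|_{X_y}}{\ti\omega_y}\leq C$. Combined with the two-sided bound on $F_y$, this also gives the lower metric bound $\ti\omega_y\geq C_0^{-1}\omega_X|_{X_y}$.

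Once $\ti\omega_y$ and $\omega_X|_{X_y}$ are uniformly equivalent, the fiberwise Monge--Amp\`ere equation for $\psi_y$ is uniformly elliptic with smooth right-hand side whose $C^k$-norms along the fiber are uniformly controlled. Evans--Krylov produces a uniform $C^{2,\al}$ bound on $\psi_y$, and a standard Schauder bootstrap obtained by differentiating the equation along the fiber yields $\|\ti\omega_y\|_{C^k(X_y,\omega_X|_{X_y})}\leq C_k$ for every $k$, which is the claim.

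The hardest part will be the Laplacian estimate. Although it formally resembles Yau's classical $C^2$ bound, $\psi_y$ is not the potential of an autonomous Monge--Amp\`ere equation on $X_y$ but rather the restriction of a total-space potential, so the function $F_y$ and the error terms arising when the total-space equation is restricted inherit nontrivial $t$- and $y$-dependence. Keeping these under uniform control, using in particular the total-space estimates of \cite{TWY} to tame the mixed/off-fiber components of $\omega$, is the delicate piece; once it is secured, the bootstrap to higher regularity is routine.
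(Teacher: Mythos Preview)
Your approach has a genuine gap at the step where you assert that $F_y$ has uniformly controlled fiberwise derivatives. Decomposing $\omega$ into base, mixed, and fiber blocks and using $\det(\omega)=\det(\omega|_{X_y})\cdot\det(Q)$, where $Q=\omega_{bb}-\omega_{bf}(\omega|_{X_y})^{-1}\omega_{fb}$ is the Schur complement of the fiber block, the total-space equation gives $F_y=c_t\det(\omega_X)\big/\big(\det(\omega_X|_{X_y})\det(Q)\big)$. Thus $F_y$ depends on the full total-space metric, in particular on its base and mixed blocks. The results you invoke from \cite{TWY, ST, GTZ} supply only the $C^0$ metric equivalence $C^{-1}(\chi+e^{-t}\omega_X)\leq\omega\leq C(\chi+e^{-t}\omega_X)$ on compact subsets away from the singular fibers, together with (from \cite{TWY}) $C^1$ bounds on the \emph{fiber block} $e^t\omega|_{X_y}$ only. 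None of these give derivative bounds along the fiber for the base or mixed blocks of $\omega$, and hence none for $F_y$. Without a uniform $C^\al$ bound on $F_y$ your Evans--Krylov step does not go through, and without $C^{k,\al}$ bounds on $F_y$ the Schauder bootstrap cannot reach higher $k$. Controlling those off-fiber components of $\omega$ to higher order is essentially the content of the theorem (indeed more), so the argument is circular as written; your own closing paragraph identifies this as ``the delicate piece'' but the references cited do not actually deliver it.

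The paper avoids this by working on the total space rather than fiber-by-fiber. One stretches the base coordinates via $F_t(y,z)=(ye^{-t/2},z)$; the metrics $e^tF_t^*\omega$ are then Ricci-flat (respectively, solve a K\"ahler--Ricci-type flow with uniformly bounded coefficient) and, by the known $C^0$ estimate, are uniformly equivalent to the Euclidean metric on compact subsets of $\mathbb{C}^m\times D$. Local interior higher-order estimates for such equations (from \cite{HT} in the elliptic case, \cite{SW} for the flow) then give uniform $C^\infty$ bounds on $e^tF_t^*\omega$; restricting to $\{0\}\times D$, where $F_t$ is the identity, yields the fiber bounds. The stretching is precisely what converts the degenerating reference geometry $\chi+e^{-t}\omega_X$ into a uniformly non-degenerate one so that the interior estimates carry uniform constants---exactly the mechanism your fiberwise scheme lacks.
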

The case when $k=1$ of Theorem \ref{main1} was proved recently in \cite{TWY}, who also showed that as $t\to \infty$ the metrics $e^t\omega|_{X_y}$ converge in $C^\alpha(X_y,\omega_X|_{X_y}),$ $0<\alpha<1$, to the unique Ricci-flat K\"ahler metric on $X_y$ cohomologous to $\omega_X|_{X_y}$. Combining this and Theorem \ref{main1} we immediately conclude:

\begin{corollary}\label{main2} Assume we are in either the first or second setup. Given $y\in B\backslash S'$ we have
\begin{equation}\label{rescaled}
e^t\omega|_{X_y}\to \omega_{SRF,y},
\end{equation}
in the smooth topology on $X_y$ as $t\to\infty$, where $\omega_{SRF,y}$ is the unique Ricci-flat metric on $X_y$
cohomologous to $\omega_X|_{X_y}$.
\end{corollary}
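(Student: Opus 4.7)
The plan is to combine the uniform higher-order estimates provided by Theorem \ref{main1} with the $C^\alpha$ convergence already established in \cite{TWY}, via a standard compactness-and-uniqueness argument. There is no genuinely hard step here, because all of the analytic content is already packaged into those two inputs; the corollary is essentially an exercise in extracting smooth limits and recognizing them.

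First, I would fix $y\in B\setminus S'$ and choose any compact neighborhood $K\subset B\setminus S'$ of $y$. By Theorem \ref{main1}, for every $k\geq 0$ there is a constant $C_k$ such that the family of K\"ahler metrics $\{e^t\omega|_{X_y}\}_{t\geq 0}$ is uniformly bounded in $C^k(X_y,\omega_X|_{X_y})$, while the second inequality in \eqref{bounds} ensures that these metrics are uniformly equivalent to the fixed reference metric $\omega_X|_{X_y}$. Applying Arzel\`a--Ascoli in each $C^k$ and then a diagonal extraction, I would conclude that any sequence $t_i\to\infty$ admits a subsequence along which $e^{t_i}\omega|_{X_y}$ converges in $C^\infty(X_y)$ to some smooth K\"ahler metric $\omega_\infty$ on $X_y$, which a posteriori satisfies the same bounds.

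Next I would identify this subsequential limit. The $C^\alpha$ convergence result of \cite{TWY} asserts that the full family $e^t\omega|_{X_y}$ converges in $C^\alpha(X_y,\omega_X|_{X_y})$ to the unique Ricci-flat K\"ahler metric $\omega_{SRF,y}$ cohomologous to $\omega_X|_{X_y}$. Since $C^\infty$ convergence implies $C^\alpha$ convergence, any subsequential smooth limit $\omega_\infty$ must equal $\omega_{SRF,y}$. Because every sequence $t_i\to\infty$ has a further subsequence along which $e^{t_i}\omega|_{X_y}\to\omega_{SRF,y}$ in $C^\infty$, the standard subsequence-of-subsequence argument forces the full family $e^t\omega|_{X_y}$ itself to converge to $\omega_{SRF,y}$ in the smooth topology on $X_y$, which is exactly \eqref{rescaled}.

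The one minor point worth attention is that the estimates of Theorem \ref{main1} are measured against the fixed reference metric on a single fiber $X_y$, so the compactness argument takes place on a fixed compact K\"ahler manifold and no moving-target issues arise. Apart from that, the argument is entirely soft: all of the difficulty has been absorbed into the uniform $C^k$ bounds of Theorem \ref{main1}, and into the uniqueness of the Ricci-flat representative of $[\omega_X|_{X_y}]$ which underlies the $C^\alpha$ convergence of \cite{TWY}.
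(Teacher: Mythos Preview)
Your proposal is correct and is precisely the argument the paper has in mind: the paper gives no separate proof of this corollary, merely stating that it follows immediately by combining the uniform $C^k$ bounds of Theorem \ref{main1} with the $C^\alpha$ convergence of \cite{TWY}. Your compactness-and-uniqueness write-up is exactly the standard way to unpack that remark.
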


This solves affirmatively a problem raised by the first-named author \cite[Question 4.1]{To3}, \cite[Question 3]{To4}.

In the second setup we investigate in more detail the nature of the singularity of the K\"ahler-Ricci flow as $t\to\infty$, and more precisely we want to determine
the possible limits (or ``tangent flows'') that one obtains by parabolically rescaling the flow around the ``singularity at infinity''.
In general determining whether tangents to solutions of geometric PDEs are unique or not is a very challenging problem. In the setup as above, we prove that such parabolic rescalings (centered at a point on a smooth fiber) converge to a unique limit. More precisely we have:

\begin{theorem}\label{main3}
Assume the same setup for the K\"ahler-Ricci flow \eqref{nkrf} as above.
Given $t_k\to\infty$ and $x \in X\backslash S$, let $V$ be the preimage of a sufficiently small neighborhood of $\pi(x)$ and let
$$\omega_k(t)=e^{t_k} \omega(te^{-t_k} + t_k)$$
be the parabolically rescaled flows, then after passing to a subsequence the flows $(V, \omega_k(t), x), t\in[-1,0],$ converge in the smooth Cheeger-Gromov sense to $(X_y\times\mathbb{C}^m,\omega_\infty,(x,0))$, where $y=\pi(x)$,
and $\omega_\infty$ is the product of the unique Ricci-flat K\"ahler metric on $X_y$ in the class $[\omega_X|_{X_y}]$ and of a flat metric on $\mathbb{C}^m$, viewed as a static solution of the flow. In particular, there is a unique such limit up to holomorphic isometry.
\end{theorem}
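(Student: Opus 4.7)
The plan is to combine the fiberwise higher-order estimate of Theorem~\ref{main1} (and hence Corollary~\ref{main2}) with the collapsing picture for $\omega(t)$ on the base, feed the resulting bounds into Hamilton's compactness theorem for Ricci flows, and identify all subsequential limits with the same explicit product metric. A direct computation rewrites the parabolic rescaling as a near-Ricci-flow:
$$\ddt \omega_k = -\Ric(\omega_k) - e^{-t_k}\omega_k,$$
so the zeroth-order term vanishes as $k\to\infty$ and any smooth Cheeger--Gromov limit is an honest (unnormalized) K\"ahler--Ricci flow. Since the scalar curvature of $\omega(s)$ is uniformly bounded along the original flow, that of $\omega_k(t)$ is $O(e^{-t_k})$, and the limit is scalar-flat and therefore stationary Ricci-flat. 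I then pick local holomorphic coordinates $(z,w)=(z_1,\ldots,z_n,w_1,\ldots,w_m)$ on $V$ with $w$ lifting coordinates on $B$ centered at $y=\pi(x)$ and $z$ restricting to holomorphic coordinates on $X_y$ near $x$, and declare the Cheeger--Gromov diffeomorphism to be the anisotropic stretch $\tilde w=e^{t_k/2}w$, under which the base block $e^{t_k}\chi$ of the rescaled metric converges to $\chi_{j\bar k}(y)\,d\tilde w_j\wedge d\bar{\tilde w}_k$, a flat metric on $\mathbb{C}^m$.

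Second, I need uniform $C^k$ bounds on $\omega_k(t)$ in these rescaled coordinates on fixed compact subsets of $X_y\times\mathbb{C}^m$, for $t\in[-1,0]$. The fiber $z$-block is controlled by Theorem~\ref{main1}, whose estimates are uniform for $y$ ranging over a compact subset of $B\setminus S'$ and hence extend to nearby fibers as well. The base $w$-block is controlled by the collapsing theorems of \cite{ST,ST2,ST3,FZ,TWY}, which give $\omega(s)\to\chi$ locally smoothly on $B\setminus S'$ together with the lower bound $\omega(s)\geq C^{-1}\chi$; both translate into uniform control of the $\tilde w\bar{\tilde w}$-block of $\omega_k$ after the stretching. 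The delicate point is the mixed block $g_{z_\al\bar w_j}$, which under the stretch gets multiplied by the anomalous factor $e^{t_k/2}$, so one must establish the decay $g_{z_\al\bar w_j}(z,w,s)=o(e^{-t_k/2})$ in the parabolic window $|w|\lesssim e^{-t_k/2}$, $s\sim t_k$. I expect this to be the main obstacle; the natural attack is to compare $\omega(s)$ with the semi-Ricci-flat reference $\chi+e^{-s}\omega_{SRF}$, whose mixed components vanish identically, and then to run a parabolic maximum principle / Calabi-type bootstrap in the same spirit as the proof of Theorem~\ref{main1}, using the now-available fiberwise $C^k$ bounds as input.

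Third, with the $C^k$ bounds in hand, Hamilton's compactness theorem (combined with the two-sided metric bounds at the basepoint to obtain a positive injectivity radius) produces a subsequential smooth pointed Cheeger--Gromov limit $(X_y\times\mathbb{C}^m,\omega_\infty,(x,0))$ which, by the first paragraph, is a stationary Ricci-flat K\"ahler metric. To identify $\omega_\infty$, I restrict to the slice $\{\tilde w=0\}$: by Corollary~\ref{main2} the fiber component of $\omega_k(0)$ converges smoothly to $\omega_{SRF,y}$, so the fiber factor of $\omega_\infty$ is precisely the Ricci-flat K\"ahler metric on $X_y$ in the class $[\omega_X|_{X_y}]$. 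Restricting to the slice $\{z=z_0\}$, the freezing of $\chi$ from Step~1 identifies the base factor with the flat metric on $\mathbb{C}^m$ determined by $\chi_{j\bar k}(y)$, while the vanishing of the mixed block at the limit is exactly what was arranged in Step~2. Thus $\omega_\infty$ is the asserted product. Uniqueness is then automatic: every subsequence of $\omega_k(t)$ has a further subsequence converging to this same explicit limit, so the full sequence converges in smooth pointed Cheeger--Gromov sense, and the limit is unique up to holomorphic isometry as claimed.
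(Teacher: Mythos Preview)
Your outline has two genuine gaps, both at the heart of the argument.

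\textbf{The $C^k$ estimates.} Controlling the fiber, base, and mixed blocks of $\omega_k$ separately is not the right route. Theorem~\ref{main1} only bounds the \emph{restriction} $e^{t}\omega|_{X_y}$, not the fiber block of the total metric together with its base and mixed derivatives; similarly the collapsing statements you cite control $\omega(s)$ relative to $\chi$ at the $C^0$ level, not the higher derivatives of the base block. For the mixed block you yourself flag the needed decay $g_{z_\alpha\bar w_j}=o(e^{-t_k/2})$ as ``the main obstacle'' and do not prove it; moreover the semi-Ricci-flat reference $\chi+e^{-s}\omega_{SRF}$ does \emph{not} have identically vanishing mixed components in general, so the proposed comparison does not get off the ground. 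The paper bypasses all of this: the $C^0$ estimate $C^{-1}(\chi+e^{-t}\omega_X)\leq\omega(t)\leq C(\chi+e^{-t}\omega_X)$ from \cite{FZ} shows that the stretched metrics $F_k^*\omega_k(t)$ are uniformly equivalent to Euclidean on compact sets of $\mathbb{C}^m\times D$, and then the local parabolic interior estimates of Sherman--Weinkove \cite{SW} for the equation $\partial_t g=-\Ric(g)-e^{-t_k}g$ give all $C^\ell$ bounds at once, with no block decomposition needed.

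\textbf{The identification of $\omega_\infty$.} Even granting a Ricci-flat limit on $X_y\times\mathbb{C}^m$, your Step~3 does not pin it down. Restricting to $\{\tilde w=0\}$ and invoking Corollary~\ref{main2} gives $\omega_\infty|_{X_y\times\{0\}}=\omega_{SRF,y}$ only on the central slice; and your claim that the base block equals $\chi_{j\bar k}(y)\,d\tilde w_j\wedge d\bar{\tilde w}_k$ conflates the limit of the \emph{reference} $e^{t_k}\chi$ with the base block of the limit of $\omega_k$ itself --- the potential $\ddbar\vp$ contributes at the same order after stretching. All that survives from the $C^0$ estimate is that $\omega_\infty$ is uniformly \emph{equivalent} to the product $\omega_{SRF,y}+\omega_E$, not equal to it. The paper's argument is different and essential: by \cite[(1.8)]{TWY} every fiber restriction $\omega_\infty|_{X_y\times\{w\}}$ equals the same $\omega_F=\omega_{SRF,y}$; a global $\ddbar$-lemma on $X_y\times\mathbb{C}^m$ (proved via the Bogomolov--Calabi decomposition of $X_y$) writes $\omega_\infty=\omega_F+\omega_E+\ddbar u$; restricting to fibers forces $u$ to be pulled back from $\mathbb{C}^m$, so $\omega_\infty$ is a genuine product $\omega_F\times\hat\omega$; and finally $\hat\omega$ is a Ricci-flat K\"ahler metric on $\mathbb{C}^m$ uniformly equivalent to Euclidean, hence flat by the Liouville theorem of \cite{RS}. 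Neither the $\ddbar$-lemma step nor the Liouville step appears in your outline, and both are needed.
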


In fact, analogous results as in Theorem \ref{main3} also hold in the first setup of collapsing of Calabi-Yau manifolds, with essentially the same proofs. We briefly discuss this at the end of Section \ref{blowup}.

We now come to the second goal of this paper, which is to characterize infinite time singularities of the K\"ahler-Ricci flow.
Recall that a long-time solution of the {\em unnormalized} K\"ahler-Ricci flow
\begin{equation}\label{krf}
\frac{\de}{\de t}\omega=-\Ric(\omega),\quad\omega(0)=\omega_X,
\end{equation}
is called type IIb if
$$\sup_{X\times [0,\infty)} t|\mathrm{Rm}(\omega(t))|_{\omega(t)}=+\infty,$$
and type III if
$$\sup_{X\times [0,\infty)} t|\mathrm{Rm}(\omega(t))|_{\omega(t)}<+\infty.$$
If the flow is instead normalized as in \eqref{nkrf}, then one has to remove the factor of $t$ from these conditions.

When $\dim X=1$, so $X$ is a compact Riemann surface, it follows easily from the work of Hamilton \cite{Ha} that all long-time solutions of the flow are of type III, and these are exactly the K\"ahler-Ricci flow solutions on compact Riemann surfaces of genus $g\geq 1$. Recent work of Bamler \cite{Ba} shows that the same statement holds for the Ricci flow (not K\"ahler) on compact Riemannian $3$-manifolds. Homogeneous type-III solutions were studied by Lott \cite{Lo}. The simplest example of a type IIb solution on a compact Riemannian $4$-manifold is a non-flat Ricci-flat K\"ahler metric on a $K3$ surface, which exists thanks to Yau \cite{Ya}, which provides a static solution of the K\"ahler-Ricci flow which is of type IIb. As we will see below, there are also non-static type IIb solutions in dimension $4$.
In the case of higher dimensional K\"ahler-Ricci flows the only general result that we are aware of is due to Fong-Zhang \cite{FZ} who proved that if $\pi:X\to B$ is a holomorphic submersion with fibers equal to complex tori, with $c_1(B)<0$, with $X$ projective and initial K\"ahler class rational, then the flow is of type III. This used key ideas of Gross-Tosatti-Zhang \cite{GTZ} in the case of collapsing of Ricci-flat metrics, and the projectivity/rationality assumptions were recently removed in \cite{HT}. See also \cite{Gi} for the case when $X$ is a product.

Our goal is to have a more detailed understanding of which long-time solutions of the K\"ahler-Ricci flows are of type IIb or type III.
Our main tool is the following observation.
\begin{proposition}\label{crit}
Let $X$ be a compact K\"ahler manifold with $K_X$ nef and which contains a possibly singular rational curve $C\subset X$ (i.e. $C$ is the birational image of a nonconstant holomorphic map $f:\mathbb{CP}^1\to X$) such that
$\int_C c_1(X)=0$. Then any solution of the unnormalized K\"ahler-Ricci flow \eqref{krf} on $X$ must be of type IIb.
\end{proposition}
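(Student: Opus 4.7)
\emph{Plan.} My plan is to argue by contradiction: suppose the flow is of type III, so that $t\,|\mathrm{Rm}(\omega(t))|_{\omega(t)}\le C$ on $X\times[t_0,\infty)$. Since $K_X$ is nef the flow exists on $[0,\infty)$, and the evolution $\partial_t[\omega(t)]=-2\pi c_1(X)$ together with the hypothesis $\int_C c_1(X)=0$ forces
$$A:=\int_C\omega(t)=\int_C\omega_X>0$$
to be a positive constant independent of $t$. The strategy is then to pull $\omega(t)$ back by $f:\mathbb{CP}^1\to X$ and combine a pointwise upper bound on the Gauss curvature of $f^*\omega(t)$ with a Gauss--Bonnet lower bound on its integral.

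Away from the finitely many critical points of $f$, $f^*\omega(t)$ is a smooth K\"ahler metric on $U:=\mathbb{CP}^1\setminus\mathrm{crit}(f)$, and the Gauss equation for the holomorphic immersion $f|_U$ gives
$$K(f^*\omega(t))\le H_{\omega(t)}\bigl(df(T\mathbb{CP}^1)\bigr)\le |\mathrm{Rm}(\omega(t))|_{\omega(t)}\le C/t,$$
where $H$ denotes holomorphic sectional curvature and $K$ Gauss curvature. On the other hand, I claim the degenerate Gauss--Bonnet identity
$$\int_U K(f^*\omega(t))\, f^*\omega(t)=4\pi+2\pi\sum_p(k_p-1)\ge 4\pi,$$
where the sum runs over critical points of $f$ with local multiplicity $k_p\ge 2$. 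Granting this, the two bounds combine to give $4\pi\le (C/t)\cdot A$, which is false for $t>CA/(4\pi)$; this is the desired contradiction and forces the flow to be of type IIb.

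The one point requiring real care is the degenerate Gauss--Bonnet identity, since $f^*\omega(t)$ vanishes at the critical points of $f$. I plan to prove it by regularizing $\omega_\epsilon:=f^*\omega(t)+\epsilon\omega_{FS}$ on $\mathbb{CP}^1$, which is smooth K\"ahler for $\epsilon>0$ and satisfies $\int\mathrm{Ric}(\omega_\epsilon)=4\pi$ by ordinary Gauss--Bonnet, and then letting $\epsilon\to 0$. In a local coordinate at a critical point $p$ one has $f^*\omega(t)=|z|^{2(k_p-1)}g(z)\,\sqrt{-1}\,dz\wedge d\bar z$ with $g$ smooth and positive; a direct Poincar\'e--Lelong computation shows that $\mathrm{Ric}(\omega_\epsilon)$ converges as currents to $\mathrm{Ric}(f^*\omega(t))|_U-2\pi(k_p-1)\delta_p$ near $p$, which after summing produces the stated identity (equivalently, it is Riemann--Hurwitz for the normalization $f$). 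Everything else---the cohomology class evolution, the Gauss equation, and the final arithmetic---is standard.
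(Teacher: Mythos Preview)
Your argument is correct and is genuinely different from the paper's. The paper in fact sketches exactly your Gauss--Bonnet approach as motivation, but only in the special case when $f$ is a smooth embedding (so no critical points). For the general case the paper abandons Gauss--Bonnet and instead works on $\mathbb{C}\subset\mathbb{CP}^1$ with the energy density $e(t)=\tr{\omega_E}{(f^*\omega(t))}$: the Schwarz Lemma inequality $\Delta_E e(t)\ge -K(t)e(t)^2$ (with $K(t)=\sup_X\mathrm{Bisec}_{\omega(t)}$) feeds into a standard $\varepsilon$-regularity lemma for pseudoholomorphic curves, which forces $K(t)\ge \pi/(8\int_C\omega_X)$. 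Your route instead pushes the Gauss--Bonnet heuristic through by handling the critical points via the Poincar\'e--Lelong/Riemann--Hurwitz correction, yielding $\int_U K(f^*\omega)f^*\omega=4\pi+2\pi\sum_p(k_p-1)\ge 4\pi$. What your approach buys: it is more elementary (no $\varepsilon$-regularity black box), gives a sharper constant, and in fact the contradiction framing is unnecessary---your computation already shows directly that $\sup_X H_{\omega(t)}\ge 4\pi/A$ for every $t$, which is type IIb immediately. What the paper's approach buys: it never needs to analyze the behavior of $f^*\omega(t)$ at the critical points at all, since it only uses one regular point and local estimates on $\mathbb{C}$; this makes it more robust and connects naturally to the symplectic-geometry toolkit the authors invoke in their subsequent remark on Gromov--Witten invariants.
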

For example if $X$ is a minimal K\"ahler surface of general type which contains a $(-2)$-curve, and there are many such examples, then this result applies and we get examples of non-static long-time K\"ahler-Ricci flow solutions in real $4$ dimensions which are of type IIb.

Now in general if a K\"ahler-Ricci flow solution on a compact K\"ahler manifold $X$ exists for all $t\geq 0$, then necessarily the canonical bundle $K_X$ is nef (the converse is also true \cite{TZ}). The abundance conjecture in algebraic geometry (or rather its generalization to K\"ahler manifolds) predicts that if $K_X$ is nef then in fact it is semiample, namely $K_X^\ell$ is base-point-free for some $\ell\geq 1$. We will assume that this is the case, and so the sections of $K_X^\ell$, some $\ell$ large, define a fiber space $\pi:X\to B$, the Iitaka fibration of $X$. As before we denote by $S'\subset B$ the singular set of $B$ together with the critical values of $\pi$, and let $S=\pi^{-1}(S')$, so that $S=\emptyset$ precisely when $B$ is smooth and $\pi$ is a submersion. Somewhat imprecisely, we will refer to $S$ as the set of singular fibers of $\pi$. We will also write $X_y=\pi^{-1}(y)$ for one of the smooth fibers, $y\in B\backslash S'$.
We then have the following result.
\begin{theorem}\label{main4}
Let $X$ be a compact K\"ahler $n$-manifold with $K_X$ semiample, and consider a solution of the K\"ahler-Ricci flow \eqref{krf}.
\begin{itemize}
\item $\kappa(X)=0$
\begin{itemize}
\item[$\diamond$] $X$ is not a finite quotient of a torus $\Rightarrow$ Type IIb
\item[$\diamond$] $X$ is a finite quotient of a torus $\Rightarrow$ Type III
\end{itemize}
\item $\kappa(X)=n$
\begin{itemize}
\item[$\diamond$] $K_X$ is ample $\Rightarrow$ Type III
\item[$\diamond$] $K_X$ is not ample $\Rightarrow$ Type IIb
\end{itemize}
\item $0<\kappa(X)<n$
\begin{itemize}
\item[$\diamond$] $X_y$ is not a finite quotient of a torus $\Rightarrow$ Type IIb
\item[$\diamond$] $X_y$ is a finite quotient of a torus and $S=\emptyset$ $\Rightarrow$ Type III
\end{itemize}
\end{itemize}
In particular, in these cases the type of singularity does not depend on the initial metric.
\end{theorem}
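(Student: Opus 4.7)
The plan is a case analysis on $\kappa(X)$, combining Cao's classical long-time convergence theorems, Proposition \ref{crit}, and the newly established Theorem \ref{main3}. A useful preliminary observation is that, writing $\tilde\omega$ for the normalized flow \eqref{nkrf}, the unnormalized flow $\omega$ of \eqref{krf} satisfies $\omega(s)=(1+s)\tilde\omega(\log(1+s))$, hence
\begin{equation*}
s\,|\mathrm{Rm}(\omega(s))|_{\omega(s)} = \tfrac{s}{1+s}\,|\mathrm{Rm}(\tilde\omega(\log(1+s)))|_{\tilde\omega(\log(1+s))},
\end{equation*}
so the type IIb vs type III dichotomy transfers faithfully between the two normalizations, and I will use whichever is more convenient in each subcase.

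When $\kappa(X)=0$, semiampleness forces $K_X$ to be torsion, so $c_1(X)=0$ and the unnormalized flow preserves the K\"ahler class $[\omega_0]$. Cao's theorem gives smooth exponential convergence of $\omega(t)$ to the unique Ricci-flat $\omega_\infty\in[\omega_0]$. If $X$ is a finite quotient of a torus then $\omega_\infty$ is flat and $|\mathrm{Rm}(\omega(t))|$ decays exponentially, giving type III; otherwise the Beauville-Bogomolov decomposition of a finite \'etale cover combined with the Cheeger-Gromoll splitting of its universal cover rules out any flat K\"ahler metric on $X$, so $|\mathrm{Rm}(\omega_\infty)|>0$ at some point, and smooth convergence then forces $t|\mathrm{Rm}(\omega(t))|\to\infty$ there, i.e.\ type IIb. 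When $\kappa(X)=n$ and $K_X$ is ample, Cao's theorem for $c_1(X)<0$ gives smooth exponential convergence of $\tilde\omega(t)$ to the K\"ahler-Einstein metric in the class $c_1(K_X)$, so $|\mathrm{Rm}(\tilde\omega)|$ is uniformly bounded and the flow is type III. If instead $K_X$ is big and semiample but not ample, the Iitaka morphism $\pi\colon X\to B$ is birational with a nonempty exceptional locus $E$, and a Kawamata-type uniruledness theorem (extended to the K\"ahler category) produces a rational curve $C\subset E$; since $C$ is contracted by sections of $K_X^\ell$, we have $\int_C c_1(X)=-K_X\cdot C=0$, and Proposition \ref{crit} gives type IIb.

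When $0<\kappa(X)<n$, the smooth fibers $X_y$ of the Iitaka fibration are Calabi-Yau $n$-folds. If $X_y$ is not a finite quotient of a torus, the same Beauville-Bogomolov argument as above shows that the unique Ricci-flat representative $\omega_{SRF,y}\in[\omega_X|_{X_y}]$ is non-flat. Pick $x\in X_y\subset X\setminus S$ and any sequence $t_k\to\infty$, and apply Theorem \ref{main3}: the rescaled flows $\omega_k(0)=e^{t_k}\tilde\omega(t_k)$ converge smoothly in the Cheeger-Gromov sense on a neighborhood of $x$ to the product $\omega_{SRF,y}\oplus\omega_{\mathrm{Eucl}}$ on $X_y\times\mathbb{C}^m$, whose Riemann tensor is nonzero at $(x,0)$. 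The scaling identity $|\mathrm{Rm}(\omega_k(0))|_{\omega_k(0)}=e^{-t_k}|\mathrm{Rm}(\tilde\omega(t_k))|_{\tilde\omega(t_k)}$ then forces $|\mathrm{Rm}(\tilde\omega(t_k))|(x)\to\infty$, so both the normalized and the unnormalized flow are type IIb. If instead $X_y$ is a finite quotient of a torus and $S=\emptyset$, so $\pi$ is a smooth submersion with torus-type fibers, the uniform Riemann-tensor bound of Fong-Zhang \cite{FZ} and its refinement by Hein and the first-named author \cite{HT} directly gives type III.

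The one genuinely external input, and the principal obstacle outside the flow theory itself, is the uniruledness of the exceptional locus in the $\kappa(X)=n$, $K_X$ not ample subcase: the K\"ahler (rather than projective) hypothesis prevents a direct appeal to Kawamata's original projective result, and one must invoke a Mori-theoretic extension to compact K\"ahler manifolds in order to produce the rational curve $C$ with $\int_C c_1(X)=0$ needed to feed into Proposition \ref{crit}. All other subcases follow cleanly from Theorem \ref{main3}, Proposition \ref{crit}, or classical convergence results for the K\"ahler-Ricci flow, and in each subcase the conclusion depends only on the complex structure of $X$ (and the fibration type of $\pi$), not on the initial metric, yielding the final assertion.
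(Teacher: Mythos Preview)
Your overall architecture matches the paper's, but there are two substantive issues.

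First, in the $\kappa(X)=n$, $K_X$ not ample subcase, the obstacle you flag is illusory. Since $K_X$ is big and semiample, $X$ is Moishezon; being also K\"ahler, $X$ is projective. Hence Kawamata's original projective result \cite{Ka} applies directly to the Iitaka morphism, and no K\"ahler extension of Mori theory is needed. Your appeal to an ``extension to the K\"ahler category'' is at best unnecessary and at worst invokes something not in the literature; the paper simply observes projectivity and then runs Zariski's main theorem plus Kawamata's uniruledness of positive-dimensional fibers.

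Second, and more seriously, in the $0<\kappa(X)<n$, $X_y$ a finite quotient of a torus, $S=\emptyset$ subcase, you cite \cite{FZ} and \cite{HT} as giving the curvature bound ``directly''. Those references treat submersions whose fibers are honest complex tori, not finite \'etale quotients of tori. The paper fills this gap with an additional argument: over a small ball $U\subset B$ one trivializes $\pi$ topologically as $U\times F$, passes to a finite unramified cover $U\times\ti{F}\to U\times F$ with $\ti{F}$ a torus, checks that the induced projection $U\times\ti{F}\to U$ is a holomorphic torus fibration, applies the local estimates of \cite{HT,FZ,GTZ} upstairs, and then descends the curvature bound using invariance under the deck group. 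Without this step your Type~III conclusion in that subcase is unjustified.

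Two minor points: Cao's theorem \cite{Ca} gives smooth convergence, but the exponential rate you invoke for the flat limit is not in \cite{Ca}; the paper supplies a short Mabuchi-energy argument (cf.\ \cite{PS}) to obtain it. Also, in the $X_y$ non-torus-quotient subcase you assert the limit has nonzero curvature at your chosen basepoint $(x,0)$; this need not hold at an arbitrary $x$, so either choose $x$ where $\omega_{SRF,y}$ is non-flat, or argue via the supremum over $X_y$ as the paper does.
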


This result leaves out only the case when $0<\kappa(X)<n$ and the general fiber $X_y$ is a finite quotient of a torus and there are singular fibers. In this case, sometimes one can find a component of a singular fiber which is uniruled, in which case the flow is type IIb by Proposition \ref{crit}, but in some other cases there is no such component, and then it seems highly nontrivial to determine whether the flow is of type III or IIb. In any case, we expect the singularity type to be always independent of the initial metric.

We are able to solve this question when $n=2$, and complete the singularity classification (note that since the abundance conjecture holds for surfaces, it is enough to assume that $K_X$ is nef). In this case
$\pi:X\to B$ is an elliptic fibration with $X$ a minimal properly elliptic K\"ahler surface, and with some singular fibers. Recall that in Kodaira's terminology (cf. \cite{bhpv}) a fiber of type $mI_0,m>1$ is a smooth elliptic curve with multiplicity $m$. We can then complete Theorem \ref{main4} in dimension $2$ as follows:
\begin{theorem}\label{surfaces}
Let $X$ be a minimal K\"ahler surface with $\kappa(X)=1$, and let $\pi:X\to B$ be an elliptic fibration which is not a submersion everywhere. Then the flow is type III if and only if the only singular fibers of $\pi$ are of type $mI_0,m>1$.
\end{theorem}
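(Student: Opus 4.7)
The plan is to prove the two directions separately and invoke already-established results for both.

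For the ``only if'' direction (equivalently, any singular fiber not of type $mI_0$ forces type IIb), I would apply Proposition \ref{crit}. By Kodaira's classification of singular fibers on a relatively minimal elliptic fibration, every singular fiber that is not of type $mI_0$ contains an irreducible component $C$ that is a (possibly nodal or cuspidal) rational curve. Since $X$ is minimal and properly elliptic, $K_X$ is nef, and Kodaira's canonical bundle formula $K_X\sim_\mathbb{Q}\pi^*(K_B+\Delta)+\sum_i(m_i-1)F_i$ shows that $K_X$ is $\mathbb{Q}$-numerically a combination of a pullback from $B$ and reduced multiple fibers, so $K_X\cdot C=0$ for any component $C$ contained in a fiber. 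Thus $\int_C c_1(X)=0$, and Proposition \ref{crit} delivers type IIb.

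For the ``if'' direction I would pass to a finite \'etale cover of $X$ on which the elliptic fibration has no singular fibers, and then invoke Theorem \ref{main4}. If all singular fibers of $\pi:X\to B$ are of type $m_iI_0$ over points $p_i\in B$, the local model near each $p_i$ is the quotient of a trivial elliptic family $E\times\Delta$ by a free action of $\mathbb{Z}/m_i\mathbb{Z}$ combining a translation of order $m_i$ on $E$ with multiplication by a primitive $m_i$-th root of unity on $\Delta$, so an $m_i$-fold branched cover in the base direction produces a local \'etale cover of $X$. To globalize, I equip $B$ with an orbifold structure $B^{\mathrm{orb}}$ with cone points of orders $m_i$ at $p_i$; since $\kappa(X)=1$, the canonical bundle formula forces the orbifold degree $\deg K_B+\sum_i(1-1/m_i)$ to be strictly positive, so $B^{\mathrm{orb}}$ is a hyperbolic $2$-orbifold and admits a finite orbifold \'etale cover by a smooth Riemann surface $B'$. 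The normalized fiber product $X':=X\times_{B^{\mathrm{orb}}}B'$ is then a finite \'etale cover $p:X'\to X$ for which $\pi':X'\to B'$ is a smooth elliptic submersion with no singular fibers.

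Now $X'$ is a compact K\"ahler surface with $K_{X'}=p^*K_X$ semiample, $\kappa(X')=1$, torus fibers, and $S=\varnothing$, so Theorem \ref{main4} in the case $0<\kappa<n=2$ with torus fibers and no singular fibers guarantees that every K\"ahler-Ricci flow solution on $X'$ is of type III. Given any solution $\omega(t)$ of \eqref{krf} on $X$, its pullback $p^*\omega(t)$ is the K\"ahler-Ricci flow on $X'$ with initial data $p^*\omega_X$ and is therefore type III; since $p$ is a local isometry and surjective, $\sup_X t|\mathrm{Rm}(\omega(t))|_{\omega(t)}=\sup_{X'}t|\mathrm{Rm}(p^*\omega(t))|_{p^*\omega(t)}$, so the original flow on $X$ is type III as well. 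The main obstacle I foresee is the construction of the \'etale cover: verifying that $mI_0$ is precisely the local model resolved by a cyclic branched base change, and that the hyperbolicity of $B^{\mathrm{orb}}$ (from $\kappa(X)=1$) is enough to globalize the local covers, requires orbifold-uniformization input, whereas the forward direction and the descent from $X'$ back to $X$ are essentially routine.
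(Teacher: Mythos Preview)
Your ``only if'' direction matches the paper's exactly: both use Kodaira's classification to locate a rational curve in any non-$mI_0$ singular fiber, then the canonical bundle formula to see $K_X\cdot C=0$, and apply Proposition~\ref{crit}.

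For the ``if'' direction your argument is correct but takes a genuinely different route. The paper works \emph{locally}: near each $mI_0$ fiber it passes to the local \'etale cover $\ti{U}\to U$ coming from the cyclic base change $z\mapsto z^m$, and then \emph{re-establishes} the key $C^0$ estimate \eqref{c2new} on $\ti{U}$ by rerunning the parabolic Schwarz lemma and the fiberwise oscillation/maximum-principle arguments of \cite{FZ,ST,To}, before invoking the higher-order local estimates of \cite{HT,FZ,GTZ}. You instead globalize: you assemble the local cyclic covers into a single finite \'etale cover $X'\to X$ via orbifold uniformization of $B^{\mathrm{orb}}$, and then simply quote the $S=\emptyset$ case of Theorem~\ref{main4} on $X'$. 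Your route is cleaner and avoids repeating any PDE work; the paper's route is analytically self-contained and does not need Selberg's lemma or the orbifold uniformization theorem, and its local nature is closer in spirit to what one would need in higher dimensions where no global uniformization is available.

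One point you should make explicit: your hyperbolicity claim $\deg K_B+\sum_i(1-1/m_i)>0$ is not immediate from $\kappa(X)=1$ alone, because Kodaira's formula has the extra term $\deg L=\chi(\mathcal{O}_X)$. You need that $\deg L=0$ when every singular fiber is $mI_0$; this follows since each such fiber has Euler number $0$, hence $e(X)=0$, and Noether's formula (with $K_X^2=0$) gives $\chi(\mathcal{O}_X)=0$. With that in hand, $\kappa(X)=1$ indeed forces $\chi^{\mathrm{orb}}(B)<0$, the bad-orbifold cases (teardrop, unequal spindle) are excluded, and Selberg supplies the finite manifold cover $B'\to B^{\mathrm{orb}}$; the normalized fiber product is then \'etale over $X$ exactly because the local model of $mI_0$ is the free cyclic quotient you describe.
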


In the Minimal Model Program, we sometime  have many different  minimal models in one birational equivalence class. In \cite{Ka2}, Kawamata shows that different minimal models can be connected by a sequence of flops.   A corollary of Proposition \ref{crit} is the relationship of this multi-minimal model phenomenon  and the singularity  type of the K\"ahler-Ricci flow.

\begin{corollary}\label{corollary}
Let  $X$ be a projective  $n$-manifold with $K_X$ nef.  If $X$ has a different minimal model $Y$, i.e. a $\mathbb{Q}$-factorial terminal variety $Y$ with $K_Y$ nef and with a birational map $\alpha: Y \dashrightarrow X$, which is not isomorphic to $X$, then
 any solution of the K\"ahler-Ricci flow \eqref{krf} on $X$ is of type IIb.  As a consequence, if there is a flow solution of type III, then $X$ is the unique minimal model in its birational equivalence class.
\end{corollary}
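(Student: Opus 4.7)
The strategy is to reduce directly to Proposition~\ref{crit}: it suffices to exhibit a possibly singular rational curve $C\subset X$ with $\int_C c_1(X)=0$, for then every solution of the unnormalized K\"ahler-Ricci flow \eqref{krf} on $X$ is automatically of type IIb, and the consequence about the uniqueness of the minimal model is just the contrapositive of the first assertion.

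To produce such a $C$ on $X$ itself, I would invoke Kawamata's theorem from \cite{Ka2}: since $X$ and $Y$ are both $\mathbb{Q}$-factorial terminal minimal models in the same birational equivalence class, they are connected by a finite sequence of flops
\[X=X_0\dashrightarrow X_1\dashrightarrow\cdots\dashrightarrow X_k=Y,\]
each arrow being by convention a non-trivial flop. The hypothesis $X\not\cong Y$ forces $k\geq 1$, so the first step of the chain exhibits $X$ as the source of a non-trivial flopping contraction $\phi:X\to Z$: a small projective birational morphism onto a normal projective variety, with non-empty exceptional locus $\mathrm{Exc}(\phi)$ of codimension $\geq 2$, and with $K_X$ numerically $\phi$-trivial. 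It is then a standard result of the Minimal Model Program — provable by applying the cone theorem to the pair $(X,\varepsilon D)$ for $\varepsilon>0$ small and $D$ a suitable $\mathbb{Q}$-divisor supplied by the flop data — that the extremal face of $\overline{NE}(X)$ contracted by $\phi$ is spanned by classes of rational curves contained in $\mathrm{Exc}(\phi)$, and any such $C$ automatically satisfies $K_X\cdot C=0$, equivalently $\int_C c_1(X)=0$, since $K_X$ is numerically trivial on $\phi$-fibers.

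Applying Proposition~\ref{crit} to this $C$ then immediately yields the first assertion, and the claimed consequence for type III flows follows by contraposition. The only substantive step is the MMP input — the production of a $K_X$-trivial rational curve on $X$ out of the mere existence of a non-isomorphic minimal model $Y$ — which is the main obstacle to making the argument rigorous, but it is a standard and well-documented consequence of Kawamata's flop theorem and the cone theorem, with no further analytic content required beyond Proposition~\ref{crit} itself.
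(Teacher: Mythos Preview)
Your proposal is correct and follows essentially the same route as the paper: both reduce to Proposition~\ref{crit} by invoking Kawamata's result \cite{Ka2} that two non-isomorphic minimal models are connected by a nontrivial sequence of flops, and then extracting from the first flop a $K_X$-trivial rational curve on $X$. The paper simply cites this last step as Lemma~2 of \cite{Ka2}, whereas you spell out the underlying cone-theorem mechanism, but the argument is the same.
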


This paper is organized as follows. In Section \ref{estimates} we prove Theorem \ref{main1}. Theorem \ref{main3} is proved in Section \ref{blowup}, and finally in Section \ref{singular} we give the proofs of Proposition \ref{crit}, Theorems \ref{main4} and \ref{surfaces} and Corollary \ref{corollary}.\\

\noindent {\bf Acknowledgements:} We are grateful to M. Gross, H.-J. Hein, B. Weinkove for very useful discussions.
Most of this work was carried out while the first-named author was visiting the Mathematical Science Center of Tsinghua University in Beijing, which he would like to thank for the hospitality.

\section{Estimates along the fibers}\label{estimates}

In this section we prove Theorem \ref{main1} by deriving {\em a priori} $C^\infty$ estimates for the rescaled metrics restricted to a smooth fiber.

We first consider the setup of collapsing of Ricci-flat K\"ahler metrics, as described in the Introduction.
We need two preliminary results. The first one follows from work of the first-named author \cite{To} (see also \cite[Lemma 4.1]{GTZ}).
\begin{lemma}[see \cite{To, GTZ}]\label{c2es} Given any compact set $K\subset X\backslash S$ there is a constant $C$ such that on $K$ the Ricci--flat metrics $\omega$ satisfy
\begin{equation}\label{c2}
C^{-1}(\chi+e^{-t}\omega_X)\leq \omega\leq C(\chi+e^{-t}\omega_X),
\end{equation}
for all $t>0$.
\end{lemma}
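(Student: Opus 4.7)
The plan is to prove $C^{-1}\ti\omega\leq\omega\leq C\ti\omega$ on $K$ by a maximum principle argument combined with the Monge-Amp\`ere equation \eqref{ma}, following~\cite{To, GTZ}.

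First, I would use the uniform bound $\|\vp\|_{L^\infty(X)}\leq C$ independent of $t$, obtained from Kolodziej-type pluripotential estimates applied to \eqref{ma}. The mass compatibility is that pointwise on $X\backslash S$ the form $\ti\omega^{n+m}$ has leading term $\binom{n+m}{n}e^{-nt}\chi^m\wedge\omega_X^n$ (higher powers $\chi^{n+m-k}$ with $k<n$ vanish since $\chi=\pi^*\omega_Z$ has rank $\leq m$), matching the $e^{-nt}$ decay of the right-hand side of \eqref{ma}. The same expansion, combined with $c_t\to c_\infty>0$, shows that the volume ratio $\omega^{n+m}/\ti\omega^{n+m}$ is uniformly bounded above and below on $K$.

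Second, I would apply the Schwarz lemma / Chern-Lu inequality in the form
$$\Delta_\omega\log\tr{\omega}{\ti\omega}\geq -C_K\tr{\omega}{\ti\omega}$$
valid on $K$, where $C_K$ is an upper bound for the bisectional curvature of $\ti\omega$ in a neighborhood of $K$. The crucial geometric point is that this curvature is uniformly bounded in $t\geq 0$ on any fixed compact $K\subset X\backslash S$: in fibration-adapted coordinates $(z,w)$ over $U\subset B\backslash S'$, the form $\chi=\pi^*\omega_Z$ depends only on $z$, the rescaling $e^{-t}\omega_X$ preserves its own curvature tensor (curvature is scale-invariant), and a block-matrix analysis of $\ti g^{-1}$ and $\partial\ti g$ shows the cross/mixed horizontal-vertical contributions stay uniformly bounded. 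I then apply the maximum principle to
$$Q=\log\tr{\omega}{\ti\omega}-A\vp+\log\eta,$$
where $A$ is large and $\eta=|\sigma|_h^{2\ve}$ is a cutoff constructed from a section $\sigma$ of a line bundle whose zero divisor contains $S$. Using $\omega-\ti\omega=\ddbar\vp$, the term $-A\vp$ contributes $A\tr{\omega}{\ti\omega}-A(n+m)$ to $\Delta_\omega Q$, while $\Delta_\omega\log\eta=-2\ve\tr{\omega}{\Theta_h}$ is bounded. For $A$ sufficiently large, the maximum principle forces $\tr{\omega}{\ti\omega}\leq C$ at the maximum of $Q$, and the $L^\infty$ bound on $\vp$ then yields $\tr{\omega}{\ti\omega}\leq C'$ on $K$, i.e.\ $\omega\geq C'^{-1}\ti\omega$. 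The reverse bound $\omega\leq C''\ti\omega$ is then immediate from step one: if $\lambda_i$ are the generalized eigenvalues of $\omega$ relative to $\ti\omega$, then $\prod\lambda_i\leq C$ and $\lambda_i\geq C'^{-1}$ together give $\lambda_j\leq C\cdot(C')^{n+m-1}$.

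The main obstacle I anticipate is the uniform curvature bound for $\ti\omega$ on compact subsets of $X\backslash S$; the block-matrix computation of the bisectional curvature of $\chi+e^{-t}\omega_X$ in fibration-adapted coordinates, establishing that the degeneration is ``smooth'' in the curvature sense, is at the heart of the Tosatti-Weinkove-Yang collapsing picture and is the most delicate ingredient. A secondary technical point is ensuring that the cutoff $\eta$, needed to confine the maximum of $Q$ away from $S$, produces a Laplacian contribution that can be absorbed uniformly in $t$ despite the collapsing of $\omega$ in the vertical directions.
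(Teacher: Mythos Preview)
Your approach has a genuine gap at the point you yourself flag as ``the main obstacle'': the bisectional curvature of $\ti\omega=\chi+e^{-t}\omega_X$ is \emph{not} uniformly bounded on compact subsets of $X\backslash S$ as $t\to\infty$. Your claim that ``the rescaling $e^{-t}\omega_X$ preserves its own curvature tensor (curvature is scale-invariant)'' is false in the sense you need. Under $g\mapsto\lambda g$ the $(0,4)$-curvature tensor scales by $\lambda$, but the bisectional curvature (which involves dividing by $|X|^2|Y|^2$) scales by $\lambda^{-1}$. Concretely, even in the trivial product case $X=B\times F$ with $\omega_X$ a product metric, for fiber directions one has $\ti R_{\alpha\bar\alpha\beta\bar\beta}\sim e^{-t}$ while $\ti g_{\alpha\bar\alpha}\ti g_{\beta\bar\beta}\sim e^{-2t}$, so the bisectional curvature of $\ti\omega$ in fiber directions is $\sim e^{t}$ and blows up. Consequently the Chern--Lu inequality $\Delta_\omega\log\tr{\omega}{\ti\omega}\geq -C_K\tr{\omega}{\ti\omega}$ fails with a uniform $C_K$, and your maximum principle argument for $Q=\log\tr{\omega}{\ti\omega}-A\vp+\log\eta$ does not close.

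The argument in \cite{To,GTZ} (which the paper cites rather than reproves; a parabolic analogue is carried out in the proof of Theorem~\ref{surfaces}) proceeds in two separate steps rather than one. First, one applies the Schwarz lemma to the map $\pi:(X,\omega)\to(B,\chi)$, where the target is the \emph{fixed} base metric $\chi$ with bounded bisectional curvature; combined with the $L^\infty$ bound on $\vp$ and a cutoff, this gives $\tr{\omega}{\chi}\leq C$ on $K$. Second, one controls the fiber direction by an independent argument: using $\chi\leq C\omega$ one proves a fiberwise oscillation bound $\mathrm{osc}_{X_y}(e^t\vp)\leq C$, and then applies the maximum principle to a quantity of the form $\log\tr{\omega}{(e^{-t}\omega_X)}-Ae^t(\vp-\underline{\vp})$, where $\underline{\vp}$ is the fiberwise average of $\vp$. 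The point is that the auxiliary function $e^t(\vp-\underline\vp)$ supplies exactly the $e^t$-growing coefficient needed to absorb the blowing-up curvature of $\omega_X$ in the fiber directions. Your one-step approach misses this mechanism entirely.
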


The second ingredient is the following local estimate for Ricci-flat K\"ahler metrics, which is contained in \cite[Sections 3.2 and 3.3]{HT} and is an adaptation (and in fact a special case) of a similar result from \cite{SW} for the K\"ahler-Ricci flow.
\begin{lemma}[see \cite{HT}]\label{localest}
Let $B_1(0)$ be the unit ball in $\mathbb{C}^{n}$ and let $\omega_E$ be the Euclidean metric.
Assume that $\omega$ is a Ricci-flat K\"ahler metric on $B_1(0)$ which satisfies
\begin{equation}\label{2assum}
A^{-1}\omega_{E}\leq \omega \leq A\, \omega_{E},
\end{equation}
for some positive constant $A$.
Then for any $k\geq 1$ there is a constant $C_k$ that depends only on $k,n,m,A$ such that on $B_{1/2}(0)$ we have
\begin{equation}\label{toprove}
\|\omega\|_{C^k(B_{1/2}(0),\omega_E)}\leq C_k.
\end{equation}
\end{lemma}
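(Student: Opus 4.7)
The proof is a standard application of local regularity theory for the complex Monge-Amp\`ere equation; the key observation is that the Ricci-flat condition forces the right-hand side to be smooth a priori.

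Since $B_1(0)$ is contractible, the $\ddbar$-Poincar\'e lemma provides a smooth strictly plurisubharmonic potential $u$ on $B_1(0)$ with $\omega=\ddbar u$. In terms of matrices the hypothesis $A^{-1}\omega_E\le\omega\le A\omega_E$ reads $A^{-1}I\le (u_{i\bar j})\le AI$. Ricci-flatness of $\omega$ amounts to the statement that $f:=\log\det(u_{i\bar j})$ is pluriharmonic on $B_1(0)$; since $|f|\le n\log A$, interior estimates for harmonic functions give $\|f\|_{C^k(B_{7/8}(0))}\le C(k,n,A)$ for every $k\ge 0$. So on $B_{7/8}(0)$ the potential $u$ solves the complex Monge-Amp\`ere equation
$$\log\det(u_{i\bar j})=f,$$
a uniformly elliptic, concave fully nonlinear PDE whose right-hand side has a priori controlled $C^k$ norms for every $k$.

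The complex version of the Evans-Krylov theorem, as used for instance in \cite{SW}, then yields an interior $C^\alpha$ estimate on the complex Hessian $(u_{i\bar j})$ on $B_{3/4}(0)$, for some $\alpha=\alpha(n,A)\in(0,1)$. Once $(u_{i\bar j})$ is known to be $C^\alpha$, the equation can be differentiated: for any constant-coefficient holomorphic or antiholomorphic first-order derivative $\partial$, the function $\partial u$ satisfies the linear, uniformly elliptic equation
$$u^{i\bar j}(\partial u)_{i\bar j}=\partial f,$$
whose coefficients $u^{i\bar j}$ and right-hand side $\partial f$ are both $C^\alpha$. Interior linear Schauder estimates therefore give a $C^{2,\alpha}$ bound on $\partial u$, i.e., a $C^{1,\alpha}$ bound on $(u_{i\bar j})$. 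Iterating this bootstrap produces $C^{k,\alpha}$ bounds on the complex Hessian for every $k\ge 0$, and hence the desired $C^k$ bounds on $\omega=\ddbar u$.

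The only genuinely delicate step is the passage from the $C^0$ bound on the complex Hessian to a $C^\alpha$ bound: the fully nonlinear nature of Monge-Amp\`ere precludes a direct Schauder argument, and one must invoke Evans-Krylov (equivalently Krylov-Safonov for concave uniformly elliptic fully nonlinear PDEs) at that one place; everything after is routine linear Schauder bootstrap.
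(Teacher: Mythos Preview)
Your argument is correct. Note, however, that the paper does not actually supply a proof of this lemma: it is quoted from \cite{HT}, where it appears as an elliptic adaptation of the parabolic interior estimates of Sherman--Weinkove \cite{SW}. So there is no ``paper's own proof'' to compare against here beyond that citation.

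That said, the route taken in those references is somewhat different from yours. The estimates in \cite{SW} (and their elliptic version in \cite{HT}) are obtained by maximum-principle arguments in the style of Calabi's third-order estimate: one computes the evolution (or, in the elliptic case, the Laplacian) of quantities built from $|\nabla g|^2$, $|\nabla^2 g|^2$, etc., and bounds them directly, bypassing Evans--Krylov entirely. Your approach---pluriharmonicity of $\log\det(u_{i\bar j})$ plus complex Evans--Krylov plus linear Schauder bootstrap---is equally standard and arguably more transparent, since it cleanly separates the PDE input (one nonlinear $C^\alpha$ estimate) from routine linear theory. The Calabi-type approach has the advantage of working uniformly in the parabolic setting and of not invoking any black-box fully nonlinear regularity theorem. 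One minor quibble: your phrase ``as used for instance in \cite{SW}'' is not quite accurate, since \cite{SW} does not go through Evans--Krylov; a better reference for the complex Evans--Krylov step would be Siu's lecture notes or, in the spirit of this paper, the Liouville-type argument around \cite{RS, Wa}.
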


\begin{proof}[Proof of Theorem \ref{main1} for Ricci-flat K\"ahler metrics]
Fix a point $y_0\in B\backslash S'$, a point $x\in X_{y_0}$ and a small chart $U\subset X$ with coordinates
$(y,z)=(y_1,\dots,y_m,z_1,\dots,z_{n})$ centered at $x$, where $(y_1,\dots,y_m)$ are the pullback
via $\pi$ of coordinates on the image $\pi(U)\subset B$ and such that the projection $\pi$ in this coordinates is just $(y,z)\mapsto y$. Such a chart exists because $\pi$ is a submersion near $x$ (see \cite[p.60]{Ko}).
We can assume that $U$ equals the polydisc where $|y_i|<1$ for $1\leq i\leq m$ and $|z_\alpha|<1$ for $1\leq \alpha\leq n$.

For each $t\geq 0$, consider the polydiscs $B_t=B_{e^{t/2}}(0)\subset\mathbb{C}^m$, let $D$ be the unit polydisc in $\mathbb{C}^{n}$ and define maps
$$F_t:B_t\times D\to U, \quad F_t(y,z)=(ye^{-t/2},z).$$
Note that the stretching $F_t$ is the identity when restricted to $\{0\}\times D$, and as $t$ approaches zero the polydiscs $B_t\times D$ exhaust $\mathbb{C}^m\times D$.
On $U$ we can write
\[\begin{split}
\omega_X(y,z)&=\mn\sum_{i,j=1}^m g_{i\ov{j}}(y,z) dy_i\wedge d\ov{y}_j+2\mathrm{Re}
\mn\sum_{i=1}^m\sum_{\alpha=1}^{n} g_{i\ov{\alpha}}(y,z) dy_i\wedge d\ov{z}_\alpha\\
&+\mn\sum_{\alpha,\beta=1}^{n}g_{\alpha\ov{\beta}}(y,z) dz_{\alpha}\wedge d\ov{z}_{\beta},
\end{split}\]
so that
\[\begin{split}
F_t^*\omega_X(y,z)&=e^{-t}\mn\sum_{i,j=1}^m g_{i\ov{j}}(ye^{-t/2},z) dy_i\wedge d\ov{y}_j\\
&+2e^{-t/2}\mathrm{Re}
\mn\sum_{i=1}^m\sum_{\alpha=1}^{n} g_{i\ov{\alpha}}(ye^{-t/2},z) dy_i\wedge d\ov{z}_\alpha\\
&+\mn\sum_{\alpha,\beta=1}^{n}g_{\alpha\ov{\beta}}(ye^{-t/2},z) dz_{\alpha}\wedge d\ov{z}_{\beta}.
\end{split}\]

Clearly as $t$ goes to infinity, these metrics converge smoothly on compact sets of $\mathbb{C}^m\times D$ to
the nonnegative form $\eta=\mn\sum_{\alpha,\beta=1}^{n}g_{\alpha\ov{\beta}}(0,z) dz_{\alpha}\wedge d\ov{z}_{\beta}$,
which is constant in the $y$ directions and is just equal to the restriction of $\omega_X$ on the fiber $X_{y_0}\cap U$,
under the identification $X_{y_0}\cap U=\{0\}\times D$.
The rescaled pullback metrics $e^tF_t^*\omega(t)$ are Ricci-flat K\"ahler on $B_t\times D$.
Next, we pull back equation \eqref{c2} and multiply it by $e^t$ and get
\begin{equation}\label{c0met}
C^{-1}\left(e^t F_t^*\chi+F_t^*\omega_X\right)\leq e^t F_t^*\omega\leq C\left(e^t F_t^*\chi+F_t^*\omega_X\right),
\end{equation}
which holds on $B_t\times D$. But $\chi$ is the pullback of a metric
$$\mn\sum_{i,j=1}^m \chi_{i\ov{j}}(y) dy_i\wedge d\ov{y}_j$$ on $B$ and so we have that
$$e^t (F_t^*\chi)(y,z)=\mn\sum_{i,j=1}^m \chi_{i\ov{j}}(ye^{-t/2}) dy_i\wedge d\ov{y}_j,$$
which as $t$ goes to infinity converge smoothly on compact sets of $\mathbb{C}^m\times D$ to
the nonnegative form $\eta'=\mn\sum_{i,j=1}^m \chi_{i\ov{j}}(0) dy_i\wedge d\ov{y}_j$. In particular, $\eta+\eta'$ is a K\"ahler metric on $\mathbb{C}^m\times D$
which is uniformly equivalent to $\omega_{E}$ on the whole of $\mathbb{C}^m\times D$. Since $\eta+\eta'$ is the limit
as $t$ goes to infinity of $e^t F_t^*\chi+F_t^*\omega_X$, we see that given any compact set $K$ of $B_t\times D$ there
is a constant $C$ independent of $t$ such that on $K$ we have
\begin{equation}\label{maa2}
C^{-1}\omega_{E}\leq e^t F_t^*\omega\leq C\omega_{E}.
\end{equation}
Using  \eqref{maa2}, we can then apply Lemma \ref{localest} to get that the Ricci-flat metrics $e^t F_t^*\omega$
have uniform $C^\infty$ bounds on any compact subset of $B_t\times D$.
If we now restrict to $\{0\}\times D$, which is identified with $X_{y_0}\cap U$, then the maps $F_t$ are just the identity, and by covering $X_{y_0}$ by finitely many such charts this shows
that the rescaled metrics along any smooth fiber $ e^t \omega|_{X_{y_0}}$ have uniform $C^\infty(X_{y_0},\omega_X|_{X_{y_0}})$ bounds. The uniform lower bound for $ e^t \omega|_{X_{y_0}}$
follows at once from \eqref{c2}.

Finally, the fact that the estimates are uniform as we vary $y_0$ in a compact subset of $B\backslash S'$ follows from the fact that all the constants in
the proof we just finished vary continuously as we vary $y_0$.
\end{proof}

Next, we consider the second setup from the Introduction, of collapsing of the K\"ahler-Ricci flow. In this case the same estimate as in \eqref{c2} was proved in \cite{FZ} (see also \cite{ST}), and the local estimates that replace Lemma \ref{localest} are given in \cite{SW}.
\begin{proof}[Proof of Theorem \ref{main1} for the K\"ahler-Ricci flow]
Given $x\in X\backslash S$, $y_0=\pi(x)$, and $t_k\to\infty$, we will show that there are constant $C_\ell, \ell\geq 0,$ such that
\begin{equation}\label{goall}
\left\| e^{t_k}\omega(t_k)|_{X_{y_0}}\right\|_{C^\ell(X_{y_0},\omega_X|_{X_{y_0}})}\leq C_\ell,\quad
e^{t_k}\omega(t_k)|_{X_{y_0}}\geq C_0^{-1} \omega_X|_{X_{y_0}},
\end{equation}
for all $k,\ell\geq 0$, and that these estimates are uniform as $y_0$ varies in a compact set of $B\backslash S'$. Once this is proved, the estimates stated in Theorem \ref{main1} follow easily from an argument by contradiction, using compactness.

As in the case of Ricci-flat K\"ahler metrics, we choose a chart $U$ centered at $x$ with local product coordinates $(y,z)$ as before, with $y$ in the unit polydisc in $\mathbb{C}^m$ and $z$ in the unit polydisc $D$ in $\mathbb{C}^n$, and define stretching maps
$$F_k:B_k\times D\to U, \quad F_k(y,z)=(ye^{-t_k/2},z),$$
where $B_k=B_{e^{t_k/2}}(0)\subset\mathbb{C}^m$. Thanks to the analog of Lemma \ref{c2} in \cite{FZ}, on $U$ we have
$$C^{-1}(\chi+e^{-t}\omega_X)\leq \omega(t)\leq C(\chi+e^{-t}\omega_X).$$
We consider the parabolically rescaled and stretched metrics
$$\ti{\omega}_k(t)=e^{t_k}F_k^* \omega(te^{-t_k} + t_k),\quad t\in[-1,0].$$
On $U$ we have that
$$C^{-1}(e^{t_k}\chi+e^{-te^{-t_k}}\omega_X)\leq e^{t_k} \omega(te^{-t_k} + t_k)\leq C(e^{t_k}\chi+e^{-te^{-t_k}}\omega_X),$$
and since we assume that $t\in [-1,0]$, we obtain
$$C^{-1}(e^{t_k}\chi+\omega_X)\leq e^{t_k} \omega(te^{-t_k} + t_k)\leq C(e^{t_k}\chi+\omega_X),$$
and so
\begin{equation}\label{c0met2}
C^{-1}F_k^*(e^{t_k}\chi+\omega_X)\leq \ti{\omega}_k(t)\leq CF_k^*(e^{t_k}\chi+\omega_X).
\end{equation}
The same calculation as in the case of Ricci-flat metrics shows that the metrics $F_k^*(e^{t_k}\chi+\omega_X)$ converge smoothly on compact
sets of $\mathbb{C}^m\times D$ to the product of a flat metric on $\mathbb{C}^m$ and the metric $\omega_X|_{X_{y_0}\cap D}$, and in particular given any compact subset
$K$ of $B_k\times D$ there is a constant $C$ such that on $K$ we have
$$C^{-1}\omega_E\leq F_k^*(e^{t_k}\chi+\omega_X)\leq C\omega_E,$$
where $\omega_E$ is the Euclidean metric on $\mathbb{C}^{n+m}$. Therefore on $K\times[-1,0]$ we have
$$C^{-1}\omega_E\leq \ti{\omega}_k(t)\leq C\omega_E.$$
The metrics $\ti{\omega}_k(t)$ for $t\in [-1,0]$ satisfy
\begin{equation}\label{rescaled2}
\frac{\de}{\de t}\ti{\omega}_k(t)=-\Ric(\ti{\omega}_k(t))-e^{-t_k}\ti{\omega}_k(t),
\end{equation}
and note that the coefficient $e^{-t_k}$ is uniformly bounded (and in fact goes to zero).
Then the interior estimates of \cite{SW} give us that, up to shrinking $K$ slightly,
$$\|\ti{\omega}_k(t)\|_{C^\ell(K,\omega_E)}\leq C_\ell,\quad \ti{\omega}_k(t)\geq C_0^{-1}\omega_E,$$
for $t\in [-1/2,0]$ and for all $k,\ell$, and for some uniform constants $C_\ell$. Setting $t=0$ we obtain local $C^\infty$ bounds for
the metrics $e^{t_k}F_k^* \omega(t_k)$.
If we now restrict to $\{0\}\times D$, which is identified with $X_{y_0}\cap U$, then the maps $F_k$ are just the identity, and by covering $X_{y_0}$ by finitely many such charts this proves \eqref{goall}.
Again, the fact that the estimates are uniform as $y_0$ varies in a compact subset of $B\backslash S'$ follows from the proof we just finished.
\end{proof}

\section{Blowup limits of the K\"ahler-Ricci flow}\label{blowup}

In this section we describe the possible blowup limits at time infinity of the K\"ahler-Ricci flow in the same setup as in the Introduction, thus proving Theorem \ref{main3}.

\begin{proof}[Proof of Theorem \ref{main3}]
Given a point $x\in X\backslash S$, $y_0=\pi(x)$, and $t_k\to\infty$, choose a chart $U$ centered at $x$ with local product coordinates $(y,z)$ as before, where $z$ varies in the unit polydisc $D\subset\mathbb{C}^n$ and $y$ in the unit polydisc in $\mathbb{C}^m$.
We let
$$\omega_k(t)=e^{t_k} \omega(te^{-t_k} + t_k),$$
be the parabolic rescalings of the metrics along the flow, with $t\in[-2,1]$. As in the proof of Theorem \ref{main1}, we can define stretchings
$$F_k:B_k\times D\to U, \quad F_k(y,z)=(ye^{-t_k/2},z),$$
where $B_k=B_{e^{t_k/2}}(0)\subset\mathbb{C}^m$, and we have that the metrics $F_k^*\omega_k(t), t\in[-1,0]$ have uniform $C^\infty$ bounds on compact sets of
$\mathbb{C}^m\times D$, and therefore up to passing to a subsequence they converge smoothly to a solution $\omega_\infty(t), t\in[-1,0]$ of the K\"ahler-Ricci flow
$$\frac{\de}{\de t}\omega_\infty(t)=-\Ric(\omega_\infty(t)),$$
on $\mathbb{C}^m\times D$. This limit flow is unnormalized because the coefficient $e^{-t_k}$ in \eqref{rescaled2} converges to zero. Also, passing to the limit in \eqref{c0met2} we see that
\begin{equation}\label{c0met3}
C^{-1}(\omega_X|_{X_{y_0}\cap D} + \omega_E)\leq \omega_\infty(t)\leq C (\omega_X|_{X_{y_0}\cap D} + \omega_E),
\end{equation}
on the whole of $\mathbb{C}^m\times D$ and for all $t\geq 0$, where $\omega_E$ is a flat metric on $\mathbb{C}^m$.
However, the original metrics $\omega(t)$ have a uniform bound on their scalar curvature thanks to \cite{ST3}, so after rescaling the scalar curvature
of $\ti{\omega}_k(t)$ goes to zero uniformly as $k$ goes to infinity. Therefore $\omega_\infty(t)$ is scalar flat, and from the pointwise evolution equation for its scalar curvature, we see that $\omega_\infty(t)$ is Ricci-flat, and hence independent of $t$.

Out next task is to glue together these local limits to obtain a global Cheeger-Gromov limit on $X_{y_0}\times\mathbb{C}^m$. To do this, we cover $X_{y_0}$ by finitely many charts $\{U_\alpha\}$ as above, and let $V=\cup_\alpha U_\alpha$. On each $U_\alpha$ we have local product coordinates $(y,z^\alpha)$, with $y$ and $z^\alpha$ belonging to the unit polydiscs in $\mathbb{C}^m$ and $\mathbb{C}^n$ respectively, and $\pi$ is given by $(y,z^\alpha)\mapsto y$. If $U_\alpha\cap U_\beta\neq\emptyset$, then on this overlap the two local coordinates are related by holomorphic transformation maps
$$z^\beta=h^{\alpha\beta}(y,z^\alpha).$$
Fix a radius $R>0$ and let $\ti{B}_k=B_{e^{-t_k/2}R}(0)$ be the polydisc of radius $e^{-t_k/2}R$, where $k$ is large enough so that $e^{-t_k/2}R<1$. We define a holomorphic coordinate change on $\ti{B}_k$ by
$y_k=e^{t_k/2}y,$ so that $\ti{B}_k$ becomes the polydisc $B_R(0)$ of radius $R$ in these new coordinates.
This is the same as applying the stretching maps $F_k$ as earlier. Then the complex manifold $V_k=\pi^{-1}(\ti{B}_k)$ (here we are viewing $\ti{B}_k$ as a subset of $B$) is obtained by gluing the polydiscs
$$\{(w,z^\alpha)\ |\ w\in B_R(0), z^\alpha\in D\},\quad \{(w,z^\beta)\ |\ w\in B_R(0), z^\beta\in D\}$$
via the transformation maps
$$z^\beta=h^{\alpha\beta}(we^{-t_k/2},z^\alpha).$$
As $k\to\infty$, these converge smoothly on compact sets to the transformation maps
$$z^\beta=h^{\alpha\beta}(0,z^\alpha),$$
which give the product manifold $B_R(0)\times X_{y_0}$.
Also, the metrics $e^{t_k}(\chi+e^{-{t_k}}\omega_X)$ on $V_k$ after changing coordinates converge smoothly on compact sets as $k\to\infty$ to the product metric $\omega_E+\omega_X|_{X_{y_0}}$ where $\omega_E$ is a Euclidean metric on $\mathbb{C}^m$, as in the proof of Theorem \ref{main1}.
On the other hand, as discussed above, the metrics $\omega_k(t)$ after coordinate change live on $V_k$ and after passing to a subsequence converge smoothly on compact sets to a Ricci-flat K\"ahler metric $\omega_\infty$ (independent of $t$) on $B_R(0)\times X_{y_0}$. By making $R$ larger and larger, we obtain the desired Cheeger-Gromov convergence of the flow on $V$ to a Ricci-flat K\"ahler metric $\omega_\infty$ on $X_{y_0}\times \mathbb{C}^m$. Also, thanks to \eqref{c0met3}, we have that $\omega_\infty$ is uniformly equivalent to $\omega_E+\omega_X|_{X_{y_0}}$ on $X_{y_0}\times\mathbb{C}^m$.

It remains to show that $\omega_\infty$ is the product of a Ricci-flat K\"ahler metric on $X_{y_0}$ and of a flat metric on $\mathbb{C}^m$. Thanks to the recent results in \cite[(1.8)]{TWY} we know that the restrictions $\omega_\infty|_{X_y}, y\in\mathbb{C}^m$, are all equal to the same Ricci-flat K\"ahler metric $\omega_F:=\omega_{SRF,y_0}$ on $X_{y_0}$ in the class $[\omega_X|_{X_{y_0}}]$. For simplicity of notation, let us also write $F=X_{y_0}$.

We claim that there is a smooth function $u$ on $F\times\mathbb{C}^m$ such that
\begin{equation}\label{ddb}
\omega_\infty=\omega_F+\omega_E+\ddbar u,
\end{equation}
on $F\times\mathbb{C}^m$, where $\omega_F+\omega_E$ is the product Ricci-flat metric.
To prove \eqref{ddb}, recall \cite{ST2} that the K\"ahler-Ricci flow is of the form
$$\omega(t)=(1-e^{-t})\chi+e^{-t}\omega_X+\ddbar\vp(t),$$
for some potentials $\vp(t)$. Therefore on $B_k\times D$ we can write
\begin{equation}\label{gg}
\ti{\omega}_k(t)=F_k^*\omega_k(t)=\omega_{\mathrm{ref},k}(t)+\ddbar(e^{t_k}F_k^*\vp(te^{-t_k}+t_k)),
\end{equation}
where as mentioned above the reference metrics
$$\omega_{\mathrm{ref},k}(t)=e^{t_k}F_k^*\left((1-e^{-te^{-t_k}-t_k})\chi+e^{-te^{-t_k}-t_k}\omega_X\right),$$
converge smoothly on compact sets to the product metric $\omega_E+\omega_X|_{F}$ (independent of $t$), as $k\to\infty$.

 We  observe that $[\omega_\infty]=[\omega_F+\omega_E]$ in $H^2(F\times\mathbb{C}^m)$, and so there is a real $1$-form $\zeta$ on $X\times\mathbb{C}^m$ such that
\begin{equation}\label{ddb1}
\omega_\infty=\omega_F+\omega_E+d\zeta=\omega_F+\omega_E+\de\zeta^{0,1}+\ov{\de\zeta^{0,1}}, \quad \db\zeta^{0,1}=0,
\end{equation}
where $\zeta=\zeta^{0,1}+\ov{\zeta^{0,1}}$. Since $c_1(F)=0$, the Bogomolov-Calabi decomposition theorem shows that there is a finite unramified cover $\pi:T\times \ti{F}\times \mathbb{C}^m\to F\times\mathbb{C}^m$, where $T$ is a torus (possibly a point), and $\ti{F}$ is simply connected with $c_1(\ti{F})=0$ (also possibly a point).
The Leray spectral sequence computing the Dolbeault cohomology of the product $T\times \ti{F}\times \mathbb{C}^m$ degenerates at the first page, giving
$$H^{0,1}(T\times \ti{F}\times \mathbb{C}^m)\cong H^{0,1}(T)\otimes H^{0}(\mathbb{C}^m,\mathcal{O}_{\mathbb{C}^m}),$$
where we used that $H^{0,1}(\mathbb{C}^m)=0$ by the $\db$-Poincar\'e Lemma.
If we write $T=\mathbb{C}^k/\Lambda$, and let $\{z_i\}$ be the standard coordinates on $\mathbb{C}^k$, then $H^{0,1}(T)$ is generated by the constant coefficient forms $\{d\ov{z}_i\}$, and so on $T\times \ti{F}\times \mathbb{C}^m$ we can write
$$\pi^*\zeta^{0,1}=\sum_{i=1}^k \sigma_i (y) d\ov{z}_i+\db h,$$
for some holomorphic functions  $\sigma_i (y)$ on $\mathbb{C}^m$ and a complex-valued function $h$,
where here $d\ov{z}_i$ also denote their pullbacks to $T\times \ti{F}\times \mathbb{C}^m$. Therefore $$ \pi^{*}\omega_\infty  = \pi^{*}(\omega_F+\omega_E)+2{\rm Re} \sum_{i}d\sigma_{i}\wedge d\bar{z}_{i} +2\sqrt{-1}\partial\overline{\partial}{\rm Im}h.  $$
Note  that $\omega_F$ is $\de\db$-cohomologous to $\omega_X|_F$ on $F$, and hence $\omega_F=\omega_X|_F+\sqrt{-1}\partial\overline{\partial} v$ for a smooth function $v$.  If we denote by $$u_{k}=\pi^{*}(e^{t_k}F_k^*\vp(te^{-t_k}+t_k)-v)-2{\rm Im}h,$$ then   by (\ref{gg}) we have that, $$\sqrt{-1}\partial\overline{\partial} u_{k}\rightarrow \pi^{*}\omega_\infty  - \pi^{*}(\omega_F+\omega_E)-2\sqrt{-1}\partial\overline{\partial}{\rm Im}h=2{\rm Re} \sum_{i}d\sigma_{i}\wedge d\bar{z}_{i},$$ smoothly on compact subsets of $T\times \ti{F}\times \mathbb{C}^m$. In particular, restricting this to any fiber $T\times\ti{F}\times \{y\}, y\in\mathbb{C}^m,$ we see that $$\ddbar u_k|_{T\times\ti{F}\times \{y\}}\to 0,$$ smoothly as $k\to\infty$. If we let $\underline{u}_k$ be the smooth function on $\mathbb{C}^m$ obtained by averaging $u_k$ on these fibers, with respect to $\pi^*\omega_F^n$, then $\ddbar \underline{u}_k$ also has uniform local $C^\infty$ bounds. The functions $u_k-\underline{u}_k$ thus have fiberwise average zero and the forms $\ddbar(u_k-\underline{u}_k)$ have uniform $C^\infty$ bounds on compact sets and their fiberwise restrictions go to zero smoothly, and therefore $u_k-\underline{u}_k$ converge to zero locally uniformly, and hence locally smoothly on $T\times \ti{F}\times \mathbb{C}^m$. It follows that $\ddbar(u_k-\underline{u}_k)\to 0$ locally smoothly, and so the form $2{\rm Re} \sum_{i}d\sigma_{i}\wedge d\bar{z}_{i},$ which is the limit of $\ddbar u_k$, is also equal to the limit of $\ddbar \underline{u}_k$. But since $\ddbar \underline{u}_k$ are forms on $\mathbb{C}^m$, this implies that $2{\rm Re} \sum_{i}d\sigma_{i}\wedge d\bar{z}_{i}$ is also the pullback of a form on $\mathbb{C}^m$, which is only possible if $d\sigma_i=0$ for all $i$. This gives $$\pi^*\de\zeta^{0,1}=\de\db h.$$
If we let $\ti{h}$ be the average of $h$ under the action of the (finite) deck transformation group of $\pi$, then $\ti{h}$ descends to a complex-valued function on $F\times\mathbb{C}^m$ and we have
$$\de\zeta^{0,1}=\de\db \ti{h},$$
which together with \eqref{ddb1} proves \eqref{ddb} with $u=2\mathrm{Im}\ti{h}$. This argument is similar to \cite[Proposition 3.1]{GTZ}, but we have exploited the more special setup here to obtain \eqref{ddb} without the need of passing to a holomorphic ``translation'' in the $T$ factor, which is needed in general.

Now that we have \eqref{ddb}, we can restrict it to any fiber $F\times\{y\}$, and since we have that $\omega_{\infty}|_{F\times\{y\}}=\omega_F$ for all $y$, we conclude that
$(\ddbar u)|_{F\times\{y\}}=0$, i.e. $u|_{F\times\{y\}}$ is a constant (which depends on $y$). In other words, $u$ is the pullback of a smooth function on $\mathbb{C}^m$. Therefore \eqref{ddb} now says that $\omega_\infty$ is the product of the Ricci-flat K\"ahler metric $\omega_F$ on $F$ times the K\"ahler metric $\hat{\omega}:=\omega_E+\ddbar u$ on $\mathbb{C}^m$. Since $\omega_\infty$ is Ricci-flat, we have that $\hat{\omega}$ is Ricci-flat as well, and the only thing left to prove is that $\hat{\omega}$ is flat. But earlier we proved that
$$C^{-1}(\omega_F+\omega_E)\leq \omega_\infty\leq C(\omega_F+\omega_E),$$
on the whole of $F\times\mathbb{C}^m$, and so we conclude that
$$C^{-1}\omega_E\leq \hat{\omega}\leq C\omega_E,$$
on $\mathbb{C}^m$.
We can write $\hat{\omega}=\ddbar \vp$ on $\mathbb{C}^m$ and the function $\log\det(\vp_{i\ov{j}})$ is harmonic and bounded on $\mathbb{C}^m$, hence constant, and we can then apply \cite[Theorem 2]{RS} to conclude that $\hat{\omega}$ is flat.
\end{proof}

In the end of the proof we used a Liouville-type theorem that a Ricci-flat K\"ahler metric on $\mathbb{C}^m$ which is uniformly equivalent to the Euclidean metric must be flat. The proof in \cite{RS} uses integral estimates, but in fact this result can also be easily proved using a local Calabi-type $C^3$ estimate as in \cite{Ya} (we leave the details to the interested reader).
Another related Liouville theorem was proved recently in \cite{Wa}.

Lastly, we mention the analogous result as Theorem \ref{main3} for the case of Ricci-flat K\"ahler metrics (the first setup in the Introduction).
\begin{theorem}
Assume the first setup in the Introduction, as in \eqref{ma}.
Given $x \in X\backslash S$, let $V$ be the preimage of a sufficiently small neighborhood of $\pi(x)$.
Then $(V, e^t\omega(t), x),$ converge in the smooth Cheeger-Gromov sense as $t\to\infty$ to $(X_y\times\mathbb{C}^m,\omega_\infty,(x,0))$, where $y=\pi(x)$,
and $\omega_\infty$ is the product of the unique Ricci-flat K\"ahler metric on $X_y$ in the class $[\omega_X|_{X_y}]$ and of a flat metric on $\mathbb{C}^m$.
\end{theorem}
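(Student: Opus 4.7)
The plan is to mirror the proof of Theorem \ref{main3}, with two simplifications: since the metrics $\omega(t)$ are already Ricci-flat for every $t$, no parabolic rescaling in the time variable is required and the limit is immediately static, bypassing the scalar-curvature-to-Ricci-flat argument used in the flow case. For any sequence $t_k\to\infty$ I would first set up the stretching maps $F_k(y,z)=(ye^{-t_k/2},z)$ from the proof of Theorem \ref{main1} and apply Theorem \ref{main1} together with Lemma \ref{localest} to the Ricci-flat K\"ahler metrics $e^{t_k}F_k^*\omega(t_k)$ on $B_k\times D$, which \eqref{maa2} shows are uniformly equivalent to $\omega_E$ on compacta. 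This gives uniform $C^\infty$ bounds on compact subsets of $\mathbb{C}^m\times D$, and after passing to a subsequence we obtain a smooth limit $\omega_\infty$ that is Ricci-flat on $\mathbb{C}^m\times D$ and uniformly equivalent to $\omega_X|_{X_y\cap D}+\omega_E$.

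Next I would globalize by the same chart-gluing construction as in the proof of Theorem \ref{main3}: cover $X_y$ by finitely many product charts $\{U_\alpha\}$ with transition maps $z^\beta=h^{\alpha\beta}(y,z^\alpha)$, observe that after stretching the transitions $h^{\alpha\beta}(we^{-t_k/2},z^\alpha)$ converge smoothly on compact sets to the constant-in-$w$ maps $h^{\alpha\beta}(0,z^\alpha)$, and let the base polydisc radius $R\to\infty$. This yields a subsequential Cheeger-Gromov limit $(V,e^{t_k}\omega(t_k),x)\to(X_y\times\mathbb{C}^m,\omega_\infty,(x,0))$, with $\omega_\infty$ Ricci-flat and globally uniformly equivalent to $\omega_F+\omega_E$, where $\omega_F:=\omega_{SRF,y}$ and $\omega_E$ is a Euclidean metric on $\mathbb{C}^m$.

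To identify $\omega_\infty$ as a Riemannian product, I would use Corollary \ref{main2} to conclude that each fiber restriction $\omega_\infty|_{X_y\times\{w\}}$ equals $\omega_F$, then apply the Bogomolov-Calabi decomposition argument from the proof of Theorem \ref{main3} (a special case of \cite[Proposition 3.1]{GTZ}) to produce a smooth function $u$ with $\omega_\infty=\omega_F+\omega_E+\ddbar u$. Restricting to each fiber forces $u$ to be the pullback of a function on $\mathbb{C}^m$, so $\omega_\infty$ splits as $\omega_F$ times a Ricci-flat K\"ahler metric $\hat\omega=\omega_E+\ddbar u$ on $\mathbb{C}^m$ uniformly equivalent to $\omega_E$. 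The Liouville-type theorem of \cite{RS} then forces $\hat\omega$ to be flat.

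Finally, to upgrade subsequential to full Cheeger-Gromov convergence as $t\to\infty$, I would observe that the pointed limit is unique up to holomorphic isometry: $\omega_F$ is the unique Ricci-flat metric in $[\omega_X|_{X_y}]$, and the flat factor is pinned down as the smooth limit of $e^{t_k}F_{t_k}^*\chi$, which always converges to the fixed constant-coefficient form $\sqrt{-1}\sum\chi_{i\bar j}(0)dy_i\wedge d\bar y_j$ determined only by $\chi$ at $\pi(x)$. The most delicate step is the Liouville conclusion that a Ricci-flat K\"ahler metric on $\mathbb{C}^m$ uniformly equivalent to $\omega_E$ must be flat, but this is exactly \cite{RS} (equivalently, a local Calabi-type $C^3$ estimate as in \cite{Ya}, as the authors remark above).
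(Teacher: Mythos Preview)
Your proposal is correct and follows essentially the same route as the paper's own proof, which likewise stretches local product charts as in Theorem~\ref{main1}, extracts subsequential Ricci-flat limits via Lemma~\ref{localest}, glues them over $X_y\times\mathbb{C}^m$, and then uses the fiberwise convergence together with the $\partial\overline{\partial}$-argument (via Bogomolov--Calabi) and the Liouville theorem of \cite{RS} to identify the limit uniquely up to isometry, whence full convergence. One minor remark: your assertion that the flat $\mathbb{C}^m$-factor is ``pinned down'' as the limit of $e^{t_k}F_{t_k}^*\chi$ is not actually justified (the potential $u$ need not be affine), but this is harmless---uniqueness of the pointed Cheeger--Gromov limit only requires uniqueness up to isometry, and any two flat metrics on $\mathbb{C}^m$ are isometric.
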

\begin{proof}
Since the proof is almost identical to the one of Theorem \ref{main3}, we will be very brief. First we work on a small polydisc centered at $x$ with local product coordinates,
and as in the proof of Theorem \ref{main1} we show that after stretching the coordinates the metrics $e^t\omega(t)$ have uniform $C^\infty$ bounds, and therefore we obtain sequential limits
which are Ricci-flat K\"ahler metrics. As in the proof of Theorem \ref{main3} limits on different charts glue together to give a Ricci-flat K\"ahler metric on $X_y\times\mathbb{C}^m$, which is uniformly equivalent to a product metric. The same argument as before shows that such a metric is unique up to holomorphic isometry, and is the product of a Ricci-flat metric on $X_y$ and a flat metric on $\mathbb{C}^m$. It follows that this is the Cheeger-Gromov limit of the whole family $e^t\omega(t)$, without passing to subsequences.
\end{proof}

\section{Infinite time singularities of the K\"ahler-Ricci flow}\label{singular}

In this section we study the singularity types of long-time solutions of the K\"ahler-Ricci flow.

We wish to prove the criterion stated in Proposition \ref{crit}, which ensures that a long-time solution of the unnormalized K\"ahler-Ricci flow \eqref{krf} is of type IIb. First, we make an elementary observation: if there are points $x_k\in X$, times $t_k\to\infty$, $2$-planes $\pi_k\subset T_{x_k}X$ and a constant $\kappa>0$ such that
\begin{equation}\label{curv}
\mathrm{Sec}_{\omega(t_k)}(\pi_k)\geq \kappa,
\end{equation}
for all $k$, then in particular $\sup_X|\mathrm{Rm}(\omega(t_k))|_{\omega(t_k)}\geq \kappa,$ and so
$$\sup_{X\times [0,\infty)} t|\mathrm{Rm}(\omega(t))|_{\omega(t)}=+\infty,$$
and the flow is type IIb.

We now consider Proposition \ref{crit}. To give an intuition for why such a result should hold, let us first assume that the map $f:\mathbb{CP}^1\to X$ is a smooth embedding. Then we can apply the Gau\ss-Bonnet theorem to get
$$4\pi =\int_{\mathbb{CP}^1} K(f^*\omega(t)) f^*\omega(t)\leq \sup_{\mathbb{CP}^1}K(f^*\omega(t))\int_C \omega(t)\leq \sup_{C} K(\omega(t)) \int_C\omega_X,$$
where $K(f^*\omega(t))$ denotes the Gau\ss\ curvature of the pullback metric, while $\sup_C K(\omega(t))$ is the maximum of the bisectional curvatures of $\omega(t)$ at points of $C\subset X$. Here we used the facts that the bisectional curvature decreases in submanifolds, and that $\int_C c_1(X)=0$. We can therefore apply the observation above and conclude that the flow is type IIb. We now give the proof in the general case.
\begin{proof}[Proof of Proposition \ref{crit}]
Let $\omega(t)$ be any solution of the unnormalized K\"ahler-Ricci flow \eqref{krf}. Since $K_X$ is nef, we know that $\omega(t)$ exists for all positive $t$.
Note that the existence of a rational curve $C$ in $X$ implies that $X$ does not admit any K\"ahler metric with nonpositive bisectional curvature, by Yau's Schwarz Lemma \cite{Ya2}.
In particular $K(t)=\sup_X \mathrm{Bisec}_{\omega(t)},$ the largest bisectional curvature of $\omega(t),$ satisfies $K(t)>0$ for all $t$. Our goal is to give an effective uniform positive lower bound for $K(t)$.
We now work on $\mathbb{C}\subset\mathbb{CP}^1$, so that we have a nonconstant entire holomorphic map $f:\mathbb{C}\to X$. Consider the time-dependent smooth function $e(t)$ on $\mathbb{C}$ (the ``energy density'') given by
$$e(t)=\tr{\omega_E}{(f^*\omega(t))},$$
where $\omega_E$ is the Euclidean metric on $\mathbb{C}$. The usual Schwarz Lemma calculation \cite{Ya2} gives
$$\Delta_{E} e(t) \geq -K(t) e(t)^2.$$
Since the map $f$ is nonconstant, there is one point in $\mathbb{C}$ where $e(t)$ is nonzero for all $t$, and we may assume that this is the origin. A standard $\ve$-regularity argument (see e.g. \cite[Lemma 4.3.2]{MS}) shows that for each fixed $t$ if
\begin{equation}\label{uno}
\int_{B_r}e(t)\leq \frac{\pi}{8K(t)},
\end{equation}
then we have
\begin{equation}\label{due}
e(t)(0)\leq \frac{8}{\pi r^2}\int_{B_r}e(t),
\end{equation}
where $B_r$ is the Euclidean disc of radius $r$ centered at the origin. Thanks to the assumption that $\int_Cc_1(X)=0$, we have
$$\lim_{r\to\infty}\int_{B_r}e(t)=\int_C\omega(t)=\int_C \omega_X,$$
which is a positive constant. For a fixed $t$, if \eqref{uno} was true for all $r> 0$ then we could let $r\to\infty$ in \eqref{due} and obtain $e(t)(0)=0$, a contradiction. Therefore, for each $t$ there is some $r(t)>0$ with
$$\frac{\pi}{8K(t)}\leq \int_{B_{r(t)}}e(t)\leq \int_C\omega_X,$$
and so $K(t)\geq \kappa>0$. Since each bisectional curvature is the sum of two sectional curvatures, we conclude that $\omega(t)$ has some sectional curvature which is larger than $\kappa/2$, and we can thus apply the observation in \eqref{curv} to conclude that the flow is type IIb.
\end{proof}

\begin{remark} The argument above shows the following general conclusion about bisectional curvature.  If $(X, \omega)$ is a compact K\"{a}hler manifold containing a rational curve $C$, then $$ \sup_X \mathrm{Bisec}_\omega \geq \frac{\pi}{8\int_{C}\omega}.$$ In many special cases we know the existence of rational curves, for example $K3$ surfaces and quintic Calabi-Yau $3$-folds. More generally, the existence of rational curves  plays an  important role in the classification of projective varieties, which is one of the ingredients in the proof of Theorem \ref{main4}, Theorem \ref{surfaces} and Corollary \ref{corollary}. Another method  to obtain rational curves is to use the Gromov-Witten invariant. For a class $A\in H_{2}(X, \mathbb{Z} )$, if the Gromov-Witten invariant $GW_{A,\omega,0}^{X}$ of genus $0$ is defined, for instance when $X$ is a Calabi-Yau $3$-fold, and does not vanish (cf. \cite{MS}), then there is a rational curve $C\subset X$ with $\int_{C}\omega\leq \int_{A}\omega$ by the following argument. Let $J_{k}$ be a sequence of regular almost-complex structures compatible with $\omega$ and which converge to the complex structure $J$ of $X$. The assumption that $GW_{A,\omega,0}^{X}\neq 0$  implies that for every $k$ there is a $J_{k}$-holomorphic curve $f_{k}: \mathbb{CP}^{1} \rightarrow X$ representing $A$. If $f_{k}$ converges to a limit when $k\rightarrow\infty$, then we obtain a rational curve in $X$, and if not, we still have one by the bubbling process (see \cite{MS}). Because of the importance of Ricci-flat K\"{a}hler metrics  on Calabi-Yau $3$-folds, this lower bound of the supremum of the bisectional curvatures may have some independent interest.
\end{remark}

\begin{proof}[Proof of Theorem \ref{main4}]

{\bf Assume }$\kappa(X)=0$.

Since $K_X$ is semiample and $\kappa(X)=0$, we conclude that $K_X^\ell$ is trivial for some $\ell\geq 1$, i.e. $X$ is Calabi-Yau.

Case 1: $X$ is not a finite quotient of a torus, which is equivalent to the fact that any Ricci-flat K\"ahler metric constructed by Yau \cite{Ya} is not flat. Let $\omega_\infty$ be the unique Ricci-flat K\"ahler metric in the class $[\omega_X]$, and let $x\in X$ be a point with a $2$-plane $\pi\subset T_xX$ with $\mathrm{Sec}_{\omega_\infty}(\pi)\geq\kappa>0$ for some constant $\kappa>0$. Indeed we may choose $x$ to be any point where $\omega_\infty$ is not flat, and since the Ricci curvature vanishes, there must be a positive sectional curvature at $x$. Thanks to \cite{Ca}, we know that the solution $\omega(t)$ of the unnormalized flow \eqref{krf} converges smoothly to $\omega_\infty$ as $t\to\infty$. In particular $\mathrm{Sec}_{\omega(t)}(\pi)\geq\kappa/2>0$ for all $t$ large. Thanks to the observation in \eqref{curv}, the flow is type IIb.

Case 2: $X$ is a finite quotient of a torus. In this case the unnormalized flow \eqref{krf} converges smoothly to a flat metric $\omega_\infty$ \cite{Ca}. In fact, it is easy to see that this convergence is exponentially fast (cf. \cite{PS}). Briefly, one considers the Mabuchi energy
$$Y(t)=\int_X |\nabla \dot{\vp}|^2_{g(t)} \omega(t)^n,$$
where $\omega(t)=\omega_X+\ddbar\vp(t)$,
and using the smooth convergence of $\omega(t)$ to $\omega_\infty$ one easily shows that $dY/dt \leq -\eta Y$ for some $\eta>0$ and all $t\geq 0$, hence $Y\leq Ce^{-\eta t}.$ Similarly, for $k\geq 2$ one considers
$$Y_k(t)=\int_X |\nabla^k_{\mathbb{R}}\ \dot{\vp}|^2_{g(t)} \omega(t)^n,$$
and using interpolation inequalities and induction on $k$ one proves easily that $dY_k/dt\leq -2\eta Y_k+C_ke^{-\eta t},$ hence $Y_k\leq C_ke^{-\eta t}$. Using the Poincar\'e and Sobolev-Morrey inequalities, one deduces that $\|\dot{\vp}\|_{C^k(X,\omega_X)}\leq C_ke^{-\eta t}$, and the exponential smooth convergence of $\omega(t)$ to $\omega_\infty$ follows immediately.
Therefore there are constant $C,\eta>0$ such that $|\mathrm{Rm}(\omega(t))|_{\omega(t)}\leq Ce^{-\eta t}$, and it follows that the flow is type III.

{\bf Assume } $\kappa(X)=n$.

Case 1: $K_X$ is ample. In this case \cite{Ca} showed that the normalized K\"ahler-Ricci flow \eqref{nkrf} converges smoothly to the unique K\"ahler-Einstein metric $\omega_\infty$ with $\Ric(\omega_{\infty})=-\omega_\infty$. In particular, the sectional curvatures along the normalized flow remain uniformly bounded for all positive time. Translating this back to the unnormalized flow, we see that the flow is type III.

Case 2: $K_X$ is not ample. By assumption we have that $K_X$ is big and semiample (so in particular nef). It follows that $X$ is Moishezon and K\"ahler, hence projective.
Take $\ell$ sufficiently large and divisible so that sections of $K_X^\ell$ give a holomorphic map $f:X\to\mathbb{CP}^N$ with image a normal projective variety with at worst canonical singularities and ample canonical divisor. Since $K_X$ is not ample, $f$ is not an isomorphism with its image, and by Zariski's main theorem there is a fiber $F\subset X$ of $f$ which is positive dimensional. Each irreducible component of $F$ is uniruled by a result of Kawamata \cite[Theorem 2]{Ka}, and if $C$ is a rational curve contained in $F$ then $f(C)$ is a point and so $\int_C c_1(X)=0$. The criterion in Proposition \ref{crit} then shows that the flow is type IIb.

{\bf Assume } $0<\kappa(X)<n$.

Case 1: the generic fiber $X_y$ of the Iitaka fibration $\pi:X\to B$ is not a finite quotient of a torus.
Let $y\in B\backslash S'$ and fix a point $x\in X_y=\pi^{-1}(y)$. If $\omega(t)$ is the solution of the normalized K\"ahler-Ricci flow \eqref{nkrf}, and $\ti{\omega}(s)$ is the solution of the unnormalized flow \eqref{krf}, then we have that
$$\ti{\omega}(s)=e^t\omega(t), \quad s=e^t-1.$$
We have proved in Theorem \ref{main3} that if $U\supset X_y$ is the preimage of a sufficiently small neighborhood of $y$, then
there is a sequence $t_k\to\infty$ such that $(U,e^{t_k}\omega(t_k),x)$ converge smoothly in the sense of Cheeger-Gromov to $(X_y\times\mathbb{C}^m,\omega_\infty, (x,0))$, and $\omega_\infty$ is the product of a Ricci-flat K\"ahler metric on $X_y$ and a flat metric on $\mathbb{C}^m$. Since $X_y$ is not a finite quotient of a torus, we conclude that $\omega_\infty$ is not flat, and so
there is some point $x'\in X_y$ and a $2$-plane $\pi\subset T_{x'}(X_y\times\{0\})$ with $\mathrm{Sec}_{\omega_\infty}(\pi)\geq\kappa>0$ for some constant $\kappa>0$. Because of the smooth Cheeger-Gromov convergence, we conclude that (up to renaming the sequence $t_k$) there are $2$-planes $\pi_k\subset T_{x'}X$ with
$\mathrm{Sec}_{\ti{\omega}(t_k)}(\pi_k)\geq\kappa/2.$ By the observation in \eqref{curv}, the flow is type IIb.

Case 2: the generic fiber $X_y$ is a finite quotient of a torus, and $S=\emptyset$. We assume first that the Iitaka fibration $\pi:X\to B$ is a smooth submersion with fibers complex tori (not just finite quotients of tori). In this case, if $X$ is furthermore projective and $[\omega_X]$ is rational, Fong-Zhang \cite{FZ} adapted the estimates of Gross-Tosatti-Zhang \cite{GTZ} to prove that the solution $\omega(t)$ of the normalized K\"ahler-Ricci flow \eqref{nkrf} has uniformly bounded curvature for all time. The projectivity and rationality assumptions were recently removed in \cite{HT}.
Translating this back to the unnormalized flow, we see that the flow is type III.

Next, we treat the general case when the fibers are finite quotients of tori, and still $S=\emptyset$. Since $\pi$ is a proper submersion, it is a smooth fiber bundle so given any $y_0\in B$ we can find a small coordinate ball $U\ni y_0$ such that there is a diffeomorphism $f:U\times F\to\pi^{-1}(U)$ compatible with the projections to $U$, with $F$ diffeomorphic to a finite quotient of a torus. We pull back the complex structure from $\pi^{-1}(U)$, so we obtain a (in general non-product) complex structure on $U\times F$ which makes the map $f$ biholomorphic.
Let $\ti{F}\to F$ be a finite unramified covering with $\ti{F}$ diffeomorphic to a torus, and put the pullback complex structure on $U\times \ti{F}$ (again, in general not the product complex structure) so that the projection $p:U\times \ti{F}\to U\times F$ is holomorphic. The projection $\pi_U:U\times \ti{F}\to U$ equals $\pi\circ f\circ p$, and so is holomorphic, and therefore every fiber $\pi_U^{-1}(y),y\in U$ is a compact K\"ahler manifold diffeomorphic to a torus, hence it is biholomorphic to a torus (see e.g. \cite[Proposition 2.9]{Cat}). Therefore $f\circ p:U\times\ti{F}\to\pi^{-1}(U)$ is a holomorphic finite unramified covering, compatible with the projections to $U$, and $\pi_U:U\times \ti{F}\to U$ is a holomorphic submersion with fibers biholomorphic to complex tori,
and with total space K\"ahler. We can then use the local estimates in \cite{HT, FZ, GTZ} to conclude that the pullback of the normalized flow to $U\times\ti{F}$ has bounded curvature. But the metrics along this flow are all invariant under the group of holomorphic deck transformations of $f\circ p$, and therefore descend to the flow on $\pi^{-1}(U)$, which has also bounded curvature. Since $y_0$ was arbitrary, we conclude that the flow on $X$ is of type III.
\end{proof}

Lastly, we give the proof of Theorem \ref{surfaces}.

\begin{proof}[Proof of Theorem \ref{surfaces}]
Let $X$ be a minimal K\"ahler surface with $\kappa(X)=1$ and $\pi:X\to B$ an elliptic fibration which is not a submersion everywhere.
Thanks to Kodaira's classification of the singular fibers of elliptic surfaces \cite[V.7]{bhpv}, we see that either some singular fiber of $\pi$ contains a rational curve $C$ or otherwise the only singular fibers are of type $mI_0,m>1$, i.e. smooth elliptic curves with nontrivial multiplicity.

In the first case Kodaira's canonical bundle formula \cite{bhpv} gives
\begin{equation}\label{canbun}
K_X=\pi^*(K_B\otimes L)\otimes \mathcal{O}\left(\sum_i (m_i-1)F_i\right),
\end{equation}
for some line bundle $L$ on $B$, where $m_i$ is the multiplicity of the component $F_i$, i.e. $\pi^*(P_i)=m_i F_i,$ as Weil divisors, where $S'=\{P_i\}\subset B$ are all the critical values of $\pi$. Then if $\ell$ is sufficiently large so that $\ell(1-1/m_i)\in\mathbb{N}$ for all $i$, then we have
$$K_X^\ell=\pi^*\left((K_B\otimes L)^\ell\otimes\mathcal{O}\left(\sum_i \frac{\ell(m_i-1)}{m_i}P_i\right)\right),$$
and since $\pi(C)$ is a point, this shows that $\int_C c_1(X)=0$, and so we conclude that the flow is type IIb by Proposition \ref{crit},

It remains to show that if the only singular fibers are multiples of a smooth elliptic curve, then the flow is of type III, i.e. that
along the normalized flow \eqref{nkrf} the curvature remains uniformly bounded for all time.

On compact sets away from the singular fibers, this is true thanks to \cite{FZ} (cf. \cite{GTZ, HT, Gi}). Let then $\Delta\subset B$ be the unit disc in some coordinate chart such that $S'\cap \Delta=\{y_0\}$, the center of the disc, so the fiber $X_{y_0}$ is of type $mI_0$ for some $m>1$ (which could depend on the point $y_0$), and let $U=\pi^{-1}(\Delta)$. Then, thanks to the local description of such singular fibers \cite[Proposition 1.6.2]{FM}, we have a commutative diagram
\begin{equation}
\begin{CD}
 \ti{U} @>{p}>>X\\
@V{\ti{\pi}}VV @VV{\pi}V \\
 \Delta @>{q}>>\Delta
\end{CD}
\end{equation}
where $q(z)=z^m$, the map $\ti{\pi}:\ti{U}\to\Delta$ is a holomorphic submersion with fibers elliptic curves, and the map $p:\ti{U}\to U$ is a holomorphic finite unramified covering.
If we can show that the pullback of the normalized flow to $\ti{U}$ has bounded curvature, then the same is true for the flow on $U$, and since $y_0\in S'$ was arbitrary, we would conclude that the flow on $X$ is of type III.

As before let $\chi$ be the restriction of $\frac{1}{\ell}\omega_{FS}$ to $\Delta\subset B\subset \mathbb{P}H^0(K_X^\ell)$. This is a smooth semipositive form on $\Delta$, although $\chi$ may not be positive definite at the center of the disc.
On $\Delta$ let $v=|z|^{2/m}-\psi$, where $\psi$ is a potential for $\chi$ on $\Delta$ and define $\omega_B=\chi+\ddbar v$, which is an orbifold flat K\"ahler metric on $\Delta$, so that $q^*\omega_B$ is the Euclidean metric on $\Delta$ (and so $q^*v$ is smooth). For simplicity we will also denote $\pi^*v$ by $v$ and $\pi^*\omega_B$ by $\omega_B$, so that with this notation we have that $p^*v$ is smooth on $\ti{U}$. Up to shrinking $\Delta$, we may also assume that $v$ is defined in a neighborhood of $\ov{\Delta}$.
Thanks to the estimate \eqref{c2} for the normalized flow, which was proved in \cite{FZ} (see also \cite{ST}), and since on $\de U$ the semipositive form $\chi$ is uniformly equivalent to $\pi^*\omega_B$, we conclude that there is a constant $C$ such that on $\de U$ we have
$$C^{-1}(\pi^*\omega_B+e^{-t}\omega_X)\leq \omega(t)\leq C(\pi^*\omega_B+e^{-t}\omega_X),$$
for all $t>0$. Therefore on $\de\ti{U}$ we get
\begin{equation}\label{c2new}
C^{-1}(\ti{\pi}^*q^*\omega_B+e^{-t}p^*\omega_X)\leq p^*\omega(t)\leq C(\ti{\pi}^*q^*\omega_B+e^{-t}p^*\omega_X).
\end{equation}
Our goal is to prove the same estimate on the whole of $\ti{U}$. We follow the same strategy as in \cite{FZ} (cf. \cite{To, ST}). Recall \cite{ST2} that the K\"ahler-Ricci flow is of the form
$$\omega(t)=(1-e^{-t})\chi+e^{-t}\omega_X+\ddbar\vp(t),$$
and the potentials $\vp$ satisfy a uniform $L^\infty$ bound $|\vp(t)|\leq C$ for all $t\geq 0$. If we let $\ti{\vp}(t)=\vp(t)-(1-e^{-t})v$, then $\ti{\vp}$ is still uniformly bounded and
$\omega(t)=(1-e^{-t})\omega_B+e^{-t}\omega_X+\ddbar\ti{\vp}(t)$ holds on $U\backslash S$. The function $\ti{\vp}$ may not be smooth on $U\cap S$, but after pulling back to $\ti{U}$ it becomes $p^*\ti{\vp}=p^*\vp-(1-e^{-t})\ti{\pi}^*q^*v$ which is smooth everywhere, and we have
$$p^*\omega(t)=(1-e^{-t})\ti{\pi}^*q^*\omega_B+e^{-t}p^*\omega_X+\ddbar p^*\ti{\vp}(t).$$
The parabolic Schwarz Lemma calculation (cf. \cite{ST, Ya2}) applied to the map $\ti{\pi}:(\ti{U},p^*\omega(t))\to (\Delta, q^*\omega_B)$ gives on $\ti{U}$
$$\left(\frac{\de}{\de t}-\Delta_{p^*\omega(t)}\right) (\tr{p^*\omega(t)}{(\ti{\pi}^*q^*\omega_B)}-2p^*\ti{\vp}(t))\leq - \tr{p^*\omega(t)}{(\ti{\pi}^*q^*\omega_B)}+4,$$
for $t$ large.
Since the quantity $\tr{p^*\omega(t)}{(\ti{\pi}^*q^*\omega_B)}-2p^*\ti{\vp}(t)$ is uniformly bounded on $\de\ti{U}$ thanks to \eqref{c2new} and the bound on $\ti{\vp}$, the maximum principle gives
\begin{equation}\label{schw}
\tr{p^*\omega(t)}{(\ti{\pi}^*q^*\omega_B)}\leq C,
\end{equation}
on $\ti{U}\times [0,\infty).$
Let now $y$ be any point in $\Delta$ and consider the fiber $\ti{X}_y=\ti{\pi}^{-1}(y)$, which is a smooth elliptic curve. Restricting to $\ti{X}_y$ and using $\ti{\pi}^*q^*\omega_B\leq Cp^*\omega(t)$, as in \cite[Corollary 5.2]{ST} or \cite[(3.9)]{To}, we easily see that
\begin{equation}\label{osc}
\mathrm{osc}_{\ti{X}_y}(e^tp^*\ti{\vp})\leq C,
\end{equation}
independent of $y\in\Delta$ and $t\geq 0$. We then let $\hat{\vp}_y(t)$ to be the average of $p^*\ti{\vp}(t)$ on $\ti{X}_y$ with respect to the volume form $p^*\omega_X|_{\ti{X}_y}$. This defines a smooth function on $\Delta$, uniformly bounded for all $t\geq 0$, and we denote its pullback to $\ti{U}$ by $\hat{\vp}(t)$. The bound \eqref{osc} gives
$\sup_{\ti{U}}|e^t (p^*\ti{\vp}-\hat{\vp})|\leq C$ independent of $t\geq 0$. Then a calculation as in \cite{FZ} (see also \cite{ST, To}), gives
$$\left(\frac{\de}{\de t}-\Delta_{p^*\omega(t)}\right) (\log \tr{p^*\omega(t)}{(e^{-t}p^*\omega_X)}-Ae^t (p^*\ti{\vp}-\hat{\vp}))\leq - \tr{p^*\omega(t)}{(p^*\omega_X)}+CAe^t,$$
if $A$ is large enough.
Since the quantity $\log \tr{p^*\omega(t)}{(e^{-t}p^*\omega_X)}-Ae^t (p^*\ti{\vp}-\hat{\vp})$ is uniformly bounded on $\de\ti{U}$ thanks to \eqref{c2new} and the bound on $e^t (p^*\ti{\vp}-\hat{\vp})$, the maximum principle gives
$\tr{p^*\omega(t)}{(e^{-t}p^*\omega_X)}\leq C$ on $\ti{U}\times [0,\infty)$. Adding this estimate to \eqref{schw} we conclude that
\begin{equation}\label{lower}
\ti{\pi}^*q^*\omega_B+e^{-t}p^*\omega_X\leq Cp^*\omega(t),
\end{equation}
on $\ti{U}\times [0,\infty).$ Now the main theorem of \cite{ST3} shows that $|\dot{\vp}|\leq C$ holds on $X\times [0,\infty)$, where $\dot{\vp}=\de \vp/\de t$. We write the flow on $X$ as the parabolic complex Monge-Amp\`ere equation
$$\frac{\de}{\de t}\vp=\log\frac{e^{t}((1-e^{-t})\chi+e^{-t}\omega_X+\ddbar\vp)^{2}}{\Omega}-\vp,$$
where $\Omega$ is a smooth volume form on $X$ with $\ddbar\log\Omega=\chi$. Pulling back to $\ti{U}$, we may write it as
$$\frac{\de}{\de t}p^*\ti{\vp}=\log\frac{e^{t}((1-e^{-t})\ti{\pi}^*q^*\omega_B+e^{-t}p^*\omega_X+\ddbar p^*\ti{\vp})^{2}}{p^*\Omega}-p^*\ti{\vp},$$
and still have $|\de p^*\ti{\vp}/\de t|\leq C$ (recall that $p^*\ti{\vp}$ is smooth on $\ti{U}$). Then we see that on $\ti{U}$ the metrics $p^*\omega(t)$ and $\ti{\pi}^*q^*\omega_B+e^{-t}p^*\omega_X$ have uniformly equivalent volume forms,
both comparable to $e^{-t}p^*\Omega$, and this together with \eqref{lower} finally proves that \eqref{c2new} holds on all of $\ti{U}\times[0,\infty)$.

Now that we have established \eqref{c2new}, we can use the same procedure as in \cite{HT, FZ, GTZ} to conclude that the pullback of the normalized flow to $\ti{U}$ has bounded curvature, and we are done.
\end{proof}

For example, if $X$ is a minimal properly elliptic K\"ahler surface such that the sections of $K_X$ already give rise to the Iitaka fibration $\pi:X\to B$, then it is easy to see using Kodaira's canonical bundle formula \eqref{canbun} that there can be no multiple fibers of $\pi$, so in this case if $S\neq\emptyset$ then the flow is always of type IIb. It is also easy to construct examples of minimal properly elliptic K\"ahler surfaces with $S\neq\emptyset$ and all singular fibers of type $mI_0$, in which case the flow is of type III. For example we can take a compact Riemann surface $\Sigma$ of genus $g\geq 2$ with an automorphism $f:\Sigma\to\Sigma$ of order $m$ with finitely many fixed points, then take the elliptic curve $E=\mathbb{C}/(\mathbb{Z}\oplus i\mathbb{Z})$, and consider the free $\mathbb{Z}_m$-action on $\Sigma\times E$ generated by $(x,z)\mapsto (f(x),z+1/m)$, $x\in\Sigma, z\in E$. Then $\pi:X=(\Sigma\times E)/\mathbb{Z}_m\to B=\Sigma/\mathbb{Z}_m$ is a minimal properly elliptic K\"ahler surface with singular fibers of type $mI_0$ above the fixed points of $f$, and $\pi$ is the Iitaka fibration of $X$.

\begin{proof}[Proof of Corollary \ref{corollary}]  It is shown in \cite{Ka2} that $X$ and $Y$ can be connected by a nontrivial sequence of finitely many flops. In particular, by Lemma 2 of \cite{Ka2}, there is a rational curve $C$ in $X$ such that $\int_C c_1(X)=0$.  The conclusion then follows from Proposition \ref{crit}.
\end{proof}

\end{document}